\documentclass[reqno,12pt]{amsart}
\usepackage{amsfonts}
\usepackage{upgreek}
\usepackage{bbm}
\usepackage{} 
\numberwithin{equation}{section}
\usepackage{indentfirst}
\usepackage{color}
\usepackage{mathrsfs}
\usepackage[all]{xy}
\usepackage{tikz}
\xyoption{all}
 \def\Hom{\mbox{\rm Hom}} \def\silt{\mbox{\rm silt}\,}

\theoremstyle{plain} 
\newtheorem{theorem}{\bf Theorem}[section]
\newtheorem{lemma}[theorem]{\bf Lemma}
\newtheorem{corollary}[theorem]{\bf Corollary}
\newtheorem{proposition}[theorem]{\bf Proposition}

\theoremstyle{definition} 
\newtheorem{definition}[theorem]{\bf Definition}
\newtheorem{remark}[theorem]{\bf Remark}
\newtheorem{example}[theorem]{\bf Example}

\newcommand{\bt}{\begin{theorem}}
\newcommand{\et}{\end{theorem}}
\newcommand{\bl}{\begin{lemma}}
\newcommand{\el}{\end{lemma}}
\newcommand{\bd}{\begin{definition}}
\newcommand{\ed}{\end{definition}}
\newcommand{\bc}{\begin{corollary}}
\newcommand{\ec}{\end{corollary}}
\newcommand{\bp}{\begin{proof}}
\newcommand{\ep}{\end{proof}}
\newcommand{\bx}{\begin{example}}
\newcommand{\ex}{\end{example}}
\newcommand{\br}{\begin{remark}}
\newcommand{\er}{\end{remark}}
\newcommand{\be}{\begin{equation}}
\newcommand{\ee}{\end{equation}}
\newcommand{\ba}{\begin{align}}
\newcommand{\ea}{\end{align}}
\newcommand{\bn}{\begin{enumerate}}
\newcommand{\en}{\end{enumerate}}
\newcommand{\bcs}{\begin{cases}}
\newcommand{\ecs}{\end{cases}}

\makeatletter
\renewcommand{\section}{\@startsection{section}{1}{0mm}
  {-\baselineskip}{0.5\baselineskip}{\bf\leftline}}
\makeatother

\begin{document}

\title[Silting subcategories and (co)torsion pairs]{Silting subcategories and (co)torsion pairs\\ associated to extended hearts} 

\author{Liangwei Huang and Haicheng Zhang}
\address{Ministry of Education Key Laboratory of NSLSCS, School of Mathematical Sciences, Nanjing Normal University, Nanjing 210023, P.R.China}
\email{2524416777@qq.com (L. Huang)}
\email{zhanghc@njnu.edu.cn (H. Zhang)}

%
\keywords{ 
silting subcategories; $s$-torsion pairs; cotorsion pairs; extended hearts.
}


\begin{abstract}
We establish the poset isomorphisms between $(d+1)$-term silting subcategories,
functorially finite $s$-torsion pairs in the $d$-extended heart,
and hereditary complete cotorsion pairs in a suitable subcategory. As an application,
we also give dg algebra versions of these bijections, which establish the poset isomorphisms between $\tau$-tilting pairs, $(d+1)$-term silting complexes, and functorially finite $s$-torsion pairs.
\end{abstract}

\maketitle

\section{Introduction}
Support $\tau$-tilting modules were introduced by Adachi, Iyama and
Reiten \cite{AIR14} to complete the classical tilting theory via mutations.
Furthermore, they established bijections between support $\tau$-tilting pairs,
functorially finite torsion pairs, and $2$-term silting complexes.
Pauksztello and Zvonareva \cite{PZ23} gave a bijection between functorially finite torsion pairs and complete cotorsion pairs in a certain $2$-term category.
In order to generalize these bijections to silting complexes of arbitrary length, the
$d$-extended module categories were introduced
in \cite{G24, Z24}. More general, the $d$-extended heart was introduced in \cite{Z24}, which is a natural generalization of the hearts of $t$-structures in triangulated categories and forms an extriangulated category in the sense of \cite{NP19}.
{The $s$-torsion pairs in extriangulated categories were introduced in \cite{AET23} with the aim to provide a general framework for the study of $t$-structures in triangulated categories and torsion pairs in abelian categories.}
Recently, Gupta \cite{G24} generalized the bijections above to a more general framework,
and established the poset isomorphisms between $(d+1)$-term silting subcategories, functorially finite $s$-torsion pairs in $d$-extended hearts,
and hereditary complete cotorsion pairs in appropriate subcategories.
Zhou \cite{Z24} proved that $d$-extended hearts admit almost split extensions, introduced $\tau$-tilting pairs in $d$-extended hearts, and
generalized the bijections above to $d$-extended hearts, thereby expanding the application scope of tilting theory.

In addition to the case of finite-dimensional algebras, Wei and Zhou \cite{WZ25} extended their research perspective to more general triangulated categories. They introduced the concept of AIR tilting subcategories as generalizations of support $\tau$-tilting modules,
and established a bijection between AIR tilting subcategories and $(d+1)$-term silting subcategories under appropriate conditions.

In this paper, we generalize Gupta's bijections by establishing the poset isomorphisms between $(d+1)$-term silting subcategories,
functorially finite $s$-torsion pairs in $d$-extended hearts, and hereditary complete cotorsion pairs in specific subcategories under a more general Krull-Schmidt triangulated category framework. In the framework of non-positive dg algebras, Mochizuki and Plogmann [13] proved that each $d$-extended heart admits ARS duality in the sense of [10], provided that the dg algebra is contained in its $d$-extended heart as a regular dg module. This enables us to extend the $\tau$-tilting theory to derived categories of non-positive dg algebras and obtain a dg analogue of the bijections above.

Throughout the paper, all categories are additive categories. Let $\mathcal{C}$ be a category, for any collection $\mathcal{X}$ consisting of some objects in $\mathcal{C}$, we denote by $\operatorname{add}\mathcal{X}$ the {\em additive closure} of $\mathcal{X}$ in $\mathcal{C}$,
which is defined to be the full subcategory of $\mathcal{C}$ consisting of direct summands of finite direct sums of objects in $\mathcal{X}$. We write $\operatorname{add}(X)$ as $\operatorname{add}\mathcal{X}$ if $\mathcal{X}$ consists of a single object $X$.
When we say that $\mathcal{X}$ is a subcategory of $\mathcal{C}$,
we always assume that $\mathcal{X}$ is full and satisfies $\mathcal{X} =\operatorname{add}\mathcal{X}$.
For any two subcategories $\mathcal{X}$ and $\mathcal{Y}$ of $\mathcal{C}$,
we write $\operatorname{Hom}_{\mathcal{C}}(\mathcal{X},\mathcal{Y}) = 0$ to mean that $\operatorname{Hom}_{\mathcal{C}}(X,Y) = 0$ for any $X \in \mathcal{X}$ and $Y \in \mathcal{Y}$.
The analogous conventions apply to the notations $\operatorname{Hom}_{\mathcal{C}}(\mathcal{X},Y) = 0$ and $\operatorname{Hom}_{\mathcal{C}}(X,\mathcal{Y}) = 0$ for any fixed objects $X$ and $Y$ in $\mathcal{C}$. For the sake of simplicity in notation, we may write $\operatorname{Hom}(X,Y)$ as $\operatorname{Hom}_{\mathcal{C}}(X,Y)$
for any $X,Y \in \mathcal{C}$.

\section{Preliminaries}
In this section, we set some notations and recall some fundamental notions. Let $\mathcal{C}$ be a triangulated category with the suspension functor denoted by $[1]$.
Let $I$ be a subset of the integer set $\mathbb{Z}$, which is often taken to be the subset denoted by $>n$, $<n$, $\geq n$, $\leq n$, $\neq n$ or $n$ with the obvious associated meaning. For any subcategory $\mathcal{X}$ of $\mathcal{C}$, we define the following classes
\begin{flalign*}
&\mathcal{X}[I]:=\{X[i]\,|\,X\in\mathcal{X}\text{ and }i\in I\},\\
&{^{\perp_I}\mathcal{X}} := \{Y \in \mathcal{C}\,|\,\operatorname{Hom}(Y,\mathcal{X}[I]) = 0\},\\
&\mathcal{X}^{\perp_I} := \{Y \in \mathcal{C}\,|\,\operatorname{Hom}(\mathcal{X},Y[I]) = 0\}.
\end{flalign*}
If $\mathcal{X}={\rm add}(X)$ for some object $X\in\mathcal{C}$, we write ${^{\perp_I}X}$ and $X^{\perp_I}$ for ${^{\perp_I}\mathcal{X}}$ and $\mathcal{X}^{\perp_I}$, respectively.
For any subcategories $\mathcal{X}$ and $\mathcal{Y}$ of $\mathcal{C}$, we denote by $\mathcal{X} \ast  \mathcal{Y}$
the class of all objects $Z$ admitting a triangle $X \to Z \to Y \to X[1]$
with $X \in \mathcal{X}$ and $Y \in \mathcal{Y}$.

In what follows, let $\mathcal{B}$ be an {\em extension-closed subcategory} of $\mathcal{C}$, which means that for any triangle $X \to Z \to Y \to X[1]$ with $X,Y\in\mathcal{B}$, we have $Z\in\mathcal{B}$. Then $\mathcal{B}$ is an extriangulated category in the sense of \cite[Definition 2.12]{NP19}.

\begin{definition}
A pair of subcategories $(\mathcal{T},\mathcal{F})$ of $\mathcal{B}$ is said to be a \textit{torsion pair} if
${\rm Hom}(\mathcal{T},\mathcal{F})=0~\text{and}~\mathcal{B}=\mathcal{T}\ast\mathcal{F}.$
In this case, the subcategory $\mathcal{T}$ is called the \textit{torsion class}, and $\mathcal{F}$ is called the \textit{torsion-free class}.
The torsion pair $(\mathcal{T},\mathcal{F})$ is called an \textit{$s$-torsion pair} if moreover ${\rm Hom}(\mathcal{T},\mathcal{F}[\leq -1])=0$. We denote by $\operatorname{tors}\mathcal{B}$ the poset of torsion classes in $\mathcal{B}$ under inclusion, and
denote by $s\operatorname{-tors}\mathcal{B}$ the poset of $s$-torsion classes in $\mathcal{B}$ under inclusion.
\end{definition}

\begin{remark}
The torsion pairs give rise to approximations. Explicitly,
let $(\mathcal{T},\mathcal{F})$ be a torsion pair in $\mathcal{B}$.
Then every $X\in\mathcal{B}$ admits a triangle
$${T\stackrel{f}{\longrightarrow}X\stackrel{g}{\longrightarrow}F\stackrel{}{\longrightarrow}T[1]}$$
  with $T\in\mathcal{T}$ and $F\in\mathcal{F}$, which implies $f$ is a right $\mathcal{T}$-approximation of $X$ and $g$ is a left $\mathcal{F}$-approximation of $X$. In particular, $\mathcal{T}$ is contravariantly finite and $\mathcal{F}$ is covariantly finite in $\mathcal{B}$. Moreover, $\mathcal{T}={^{\bot_0}\mathcal{F}}\cap\mathcal{B}$ and $\mathcal{F}=\mathcal{T}^{\bot_0}\cap\mathcal{B}$.
\end{remark}

\begin{definition}
A pair of subcategories $(\mathcal{U},\mathcal{V})$ is called a \textit{cotorsion pair} in $\mathcal{B}$ if
\begin{enumerate}
\item ${\rm Hom}(\mathcal{U},\mathcal{V}[1])=0;$
\item For any $B\in\mathcal{B}$, there exist two triangles
 \begin{equation*}
    V\stackrel{}{\longrightarrow} U\stackrel{}{\longrightarrow} B\stackrel{}{\longrightarrow} V[1] \text{ and } B\stackrel{}{\longrightarrow} V'\stackrel{}{\longrightarrow} U'\stackrel{}{\longrightarrow} B[1]
 \end{equation*}
such that $U,\,U'\in\mathcal{U}$ and $V,\,V'\in\mathcal{V}$.
\end{enumerate}
In this case, the subcategory $\mathcal{V}$ is called the \textit{cotorsion class}.
We denote by $\operatorname{cotors}\mathcal{B}$ the poset of cotorsion classes in $\mathcal{B}$ under inclusion.

The cotorsion pair and the corresponding cotorsion class are said to be \textit{hereditary} if moreover ${\rm Hom}(\mathcal{U},\mathcal{V}[\geq 2])=0$.
We denote by $\operatorname{h.cotors}\mathcal{B}$ the poset of hereditary cotorsion classes in $\mathcal{B}$ under inclusion.
\end{definition}

\begin{remark}
  Let $(\mathcal{U},\mathcal{V})$ be a cotorsion pair in $\mathcal{B}$.
  Similarly to torsion pairs, we have that $\mathcal{U}$ is contravariantly finite and $\mathcal{V}$ is covariantly finite in $\mathcal{B}$.
  Moreover, $\mathcal{U}={^{\bot_1}\mathcal{V}}\cap\mathcal{B}$ and $\mathcal{V}=\mathcal{U}^{\bot_1}\cap\mathcal{B}$.
\end{remark}

\begin{definition}
A torsion pair $(\mathcal{T},\mathcal{F})$ in $\mathcal{C}$ is said to be
\begin{enumerate}
\item a \textit{$t$-structure} if $\mathcal{T}[1]\subseteq\mathcal{T}$, in this case, $\mathcal{T}\cap\mathcal{F}[1]$ is called the {\em heart} of $(\mathcal{T},\mathcal{F})$;
\item a \textit{co-$t$-structure} if $\mathcal{T}[-1]\subseteq\mathcal{T}$, in this case, $\mathcal{T}[1]\cap\mathcal{F}$ is called the {\em coheart} of $(\mathcal{T},\mathcal{F})$;
\item \textit{nondegenerate} if $\bigcap_{k\in \mathbb{Z}}\mathcal{T}[k]=\{0\}=\bigcap_{k\in \mathbb{Z}}\mathcal{F}[k]$;
\item \textit{bounded} if $\bigcup_{k\in \mathbb{Z}}\mathcal{T}[k]=\mathcal{C}=\bigcup_{k\in \mathbb{Z}}\mathcal{F}[k]$.
\end{enumerate}
\end{definition}

\begin{remark}\label{tsctc}
$(1)$ By \cite[Lemma 3.3]{AET23}, a torsion pair $(\mathcal{T},\mathcal{F})$ in $\mathcal{C}$ is a $t$-structure if and only if
  it is an $s$-torsion pair in $\mathcal{C}$.
  We usually denote a $t$-structure in $\mathcal{C}$ by $(\mathcal{C}^{\leq 0},\mathcal{C}^{\geq 1})$,
  and set
  \[(\mathcal{C}^{\leq n},\mathcal{C}^{\geq n+1}):=(\mathcal{C}^{\leq 0}[-n],\mathcal{C}^{\geq 1}[-n])\]
  for any $n\in\mathbb{Z}$.
  For any $X\in\mathcal{C}$, there exists a unique triangle up to isomorphism
  $${\tau_{\leq n}X\stackrel{}{\longrightarrow} X\stackrel{}{\longrightarrow}\tau_{\geq n+1}X\stackrel{}{\longrightarrow}\tau_{\leq n}X[1]}$$
  such that $\tau_{\leq n}X\in\mathcal{C}^{\leq n}$ and $\tau_{\geq n+1}X\in\mathcal{C}^{\geq n+1}$, which is called the \textit{$t$-decomposition of $X$ with respect to the $t$-structure $(\mathcal{C}^{\leq n},\mathcal{C}^{\geq n+1})$}.
  This $t$-decomposition induces  truncated functors
  \[\tau_{\leq n}:\mathcal{C}\longrightarrow\mathcal{C}^{\leq n} \text{ and } \tau_{\geq n+1}:\mathcal{C}\longrightarrow\mathcal{C}^{\geq n+1}\]
  such that
  $(\sigma_{\leq n},\tau_{\leq n})$ and $(\tau_{\geq n+1},\sigma_{\geq n+1})$ are adjoint pairs,
  where $\sigma_{\leq n}:\mathcal{C}^{\leq n}\to\mathcal{C}$ and $\sigma_{\geq n+1}:\mathcal{C}^{\geq n+1}\to\mathcal{C}$ are the natural inclusion functors.

$(2)$ A torsion pair $(\mathcal{T},\mathcal{F})$ in $\mathcal{C}$ is a co-$t$-structure if and only if
  $(\mathcal{T}[1],\mathcal{F})$ is a hereditary cotorsion pair in $\mathcal{C}$.

\end{remark}

\begin{definition}
  Let $\mathcal{S}$ be a subcategory of $\mathcal{C}$.

$(1)$ The subcategory $\mathcal{S}$ is called \textit{presilting} if ${\rm Hom}(\mathcal{S},\mathcal{S}[>0])=0$.

$(2)$ The subcategory $\mathcal{S}$ is called \textit{silting} if it is presilting and ${\bf thick}_{\mathcal{C}}(\mathcal{S})=\mathcal{C}$,
  where ${\bf thick}_{\mathcal{C}}(\mathcal{S})$ denotes the smallest triangulated subcategory of $\mathcal{C}$ which contains $\mathcal{S}$ and is closed under direct summands.
\end{definition}

Let us define a partial order on the collection $\operatorname{silt}\mathcal{C}$ of silting subcategories of $\mathcal{C}$ as follows:
\begin{equation}\label{siltingorder}
\text{For~any}~\mathcal{S}_1,\mathcal{S}_2\in\operatorname{silt}\mathcal{C},~\text{define}~\mathcal{S}_1\leq \mathcal{S}_2\Longleftrightarrow{\rm Hom}(\mathcal{S}_2,\mathcal{S}_1[>0])=0.\end{equation}
Let $\mathcal{S}$ be a silting subcategory of $\mathcal{C}$. For any integers $p\leq q$, set
\begin{equation*}
  \begin{split}
    &\mathcal{S}^{[p,q]}:=\mathcal{S}[p]\ast\mathcal{S}[p+1]\ast\cdots\ast\mathcal{S}[q],\\
    &\mathcal{S}^{(-\infty,q]}:=\bigcup_{k = -q}^{+\infty} \mathcal{S}^{[-k,q]},\\
    &\mathcal{S}^{[p,+\infty)}:=\bigcup_{k = p}^{+\infty} \mathcal{S}^{[p,k]}.
  \end{split}
\end{equation*}

\begin{lemma}\label{siltiff}
  Let $\mathcal{S}_1,\mathcal{S}_2$ be presilting subcategories of $\mathcal{C}$ such that ${\bf thick}_{\mathcal{C}}(\mathcal{S}_1)={\bf thick}_{\mathcal{C}}(\mathcal{S}_2)$.
Then for any integers $p\leq q$, the following are equivalent:
\begin{enumerate}
\item $\mathcal{S}_2\subseteq \mathcal{S}_1^{[p,q]};$
\item $\mathcal{S}_1[q]\leq \mathcal{S}_2\leq\mathcal{S}_1[p];$
\item $\mathcal{S}_1^{\perp_{>0}}[q]\subseteq \mathcal{S}_2^{\perp_{>0}}\subseteq \mathcal{S}_1^{\perp_{>0}}[p]$.
\end{enumerate}
\end{lemma}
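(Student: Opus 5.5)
The plan is to first rewrite conditions (2) and (3) as Hom-vanishing conditions, and then prove $(1)\Rightarrow(2)\Rightarrow(1)$ and $(2)\Leftrightarrow(3)$ separately. By the definition \eqref{siltingorder}, $\mathcal{S}_1[q]\leq\mathcal{S}_2$ means ${\rm Hom}(\mathcal{S}_2,\mathcal{S}_1[>q])=0$. Likewise $\mathcal{S}_2\leq\mathcal{S}_1[p]$ means ${\rm Hom}(\mathcal{S}_1,\mathcal{S}_2[>-p])=0$. For (3), note that $Y\in\mathcal{S}_1^{\perp_{>0}}[q]$ if and only if ${\rm Hom}(\mathcal{S}_1[q],Y[>0])=0$. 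Hence (3) reads $(\mathcal{S}_1[q])^{\perp_{>0}}\subseteq\mathcal{S}_2^{\perp_{>0}}\subseteq(\mathcal{S}_1[p])^{\perp_{>0}}$. So both (2) and (3) are pairs of statements about two presilting subcategories $\mathcal{A},\mathcal{B}$ with ${\bf thick}(\mathcal{A})={\bf thick}(\mathcal{B})$.

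\textbf{Step 1: $(1)\Rightarrow(2)$.} Since $\mathcal{S}_1$ is presilting, ${\rm Hom}(\mathcal{S}_1[j],\mathcal{S}_1[>q])=0$ for every $j\leq q$. The class ${}^{\perp_{>q}}\mathcal{S}_1$ is closed under extensions, by the long exact Hom sequence. Hence it contains $\mathcal{S}_1^{[p,q]}\supseteq\mathcal{S}_2$. Dually, ${\rm Hom}(\mathcal{S}_1[p],\mathcal{S}_1[j][>0])=0$ for all $j\geq p$, and extension-closure gives ${\rm Hom}(\mathcal{S}_1[p],\mathcal{S}_2[>0])=0$.

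\textbf{Step 2: $(2)\Rightarrow(1)$, the truncation argument.} I would use the standard fact that ${\bf thick}(\mathcal{S}_1)=\bigcup_n\mathcal{S}_1^{[-n,n]}$ for presilting $\mathcal{S}_1$ (as in Iyama--Yang / Aihara--Iyama). This gives $B\in\mathcal{S}_1^{[-n,n]}$ for each $B\in\mathcal{S}_2$ and some $n\gg0$.

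To cut the top degree, suppose $n>q$ and write a triangle $X\to B\xrightarrow{g} S[n]\to X[1]$ with $X\in\mathcal{S}_1^{[-n,n-1]}$ and $S\in\mathcal{S}_1$. Then $g=0$ because ${\rm Hom}(B,\mathcal{S}_1[>q])=0$. So the triangle splits as $X\cong B\oplus S[n-1]$, and $B$ is a direct summand of an object of $\mathcal{S}_1^{[-n,n-1]}$.

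To cut the bottom degree, suppose $-n<p$ and use a triangle $S'[-n]\xrightarrow{f}B\to Y\to S'[-n+1]$ with $Y\in\mathcal{S}_1^{[-n+1,q]}$. Then $f=0$ because ${\rm Hom}(\mathcal{S}_1,B[>-p])=0$. Hence $Y\cong B\oplus S'[-n+1]$.

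Iterating both cuts lands $B$ in $\mathcal{S}_1^{[p,q]}$, provided each step stays inside the relevant class. This is the \emph{main obstacle}: I need that $\mathcal{S}_1^{[a,b]}$ is closed under direct summands for presilting $\mathcal{S}_1$. I would cite or reprove this following Iyama--Yang, by induction on $b-a$. The idea is to take a summand $B$ of $Z\in\mathcal{S}_1[a]\ast\mathcal{S}_1^{[a+1,b]}$, use the right $\mathcal{S}_1[a]$-approximation coming from the vanishing ${\rm Hom}(\mathcal{S}_1[a],\mathcal{S}_1^{[a+1,b]})=0$, and compose with the idempotent of $Z$. Alternatively, one can use the Krull--Schmidt hypothesis of the paper's framework.

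\textbf{Step 3: $(2)\Leftrightarrow(3)$.} It suffices to show that, for presilting $\mathcal{A},\mathcal{B}$ with the same thick closure, ${\rm Hom}(\mathcal{B},\mathcal{A}[>0])=0$ if and only if $\mathcal{A}^{\perp_{>0}}\subseteq\mathcal{B}^{\perp_{>0}}$. Apply this to $(\mathcal{A},\mathcal{B})=(\mathcal{S}_1[q],\mathcal{S}_2)$ and to $(\mathcal{S}_2,\mathcal{S}_1[p])$.

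For ``$\Leftarrow$'': $\mathcal{A}\subseteq\mathcal{A}^{\perp_{>0}}$ because $\mathcal{A}$ is presilting, so $\mathcal{A}\subseteq\mathcal{B}^{\perp_{>0}}$.

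For ``$\Rightarrow$'': the one-sided version of Step 2 (only the top cut) gives $\mathcal{B}\subseteq\mathcal{A}^{(-\infty,0]}$. Since ${\rm Hom}(\mathcal{A}[-k],Y[>0])=0$ for $k\geq0$ and $Y\in\mathcal{A}^{\perp_{>0}}$, extension-closure yields $Y\in\mathcal{B}^{\perp_{>0}}$.
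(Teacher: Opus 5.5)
Your proof is correct, but it is organized differently from the paper's.

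\textbf{The paper's route.} It takes $(1)\Leftrightarrow(2)$ directly from \cite[Lemma 3.6]{AMY19}. It gets $(3)\Rightarrow(2)$ from $\mathcal{S}_i\subseteq\mathcal{S}_i^{\perp_{>0}}$, which is your ``$\Leftarrow$''. It then proves $(1)\Rightarrow(3)$ in two halves:
\begin{itemize}
\item the left inclusion by the same extension-closure computation as your Step 1;
\item the right inclusion from the dual containment $\mathcal{S}_1\subseteq\mathcal{S}_2^{[-q,-p]}$, quoted from \cite[Lemma 3.7]{AMY19}.
\end{itemize}

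\textbf{Your route.} You reprove $(1)\Leftrightarrow(2)$ by the splitting argument: the map from $B$ to the top layer, or from the bottom layer to $B$, vanishes, so that layer peels off as a direct summand. You then prove $(2)\Leftrightarrow(3)$ half by half, via the criterion $\operatorname{Hom}(\mathcal{B},\mathcal{A}[>0])=0\iff\mathcal{A}^{\perp_{>0}}\subseteq\mathcal{B}^{\perp_{>0}}$. Its ``$\Rightarrow$'' direction goes through $\mathcal{S}_1[p]\subseteq\mathcal{S}_2^{(-\infty,0]}$, which is exactly the one-sided part of \cite[Lemma 3.7]{AMY19} that the paper actually uses.

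\textbf{Comparison.} Your version is more self-contained. Its only external inputs are that, for presilting $\mathcal{S}$, the class $\mathcal{S}^{[a,b]}$ is closed under direct summands and ${\bf thick}(\mathcal{S})=\bigcup_n\mathcal{S}^{[-n,n]}$. It also gives a sharper statement: each half of (2) is equivalent to the corresponding half of (3) independently of the other. The paper's version buys brevity by outsourcing both directions to \cite{AMY19}.

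\textbf{One correction.} Drop the ``Krull--Schmidt alternative'' for summand-closure. This lemma sits in Section 2, where $\mathcal{C}$ is an arbitrary triangulated category, so no such hypothesis is available; rely on the Iyama--Yang result instead. Your idempotent sketch is too vague to stand as a proof, but a clean induction on $b-a$ works:
\begin{itemize}
\item Let $Z=B\oplus B'$ and let $s\colon S[a]\to Z$ have cone $Z'\in\mathcal{S}^{[a+1,b]}$. Then $s$ is a right $\mathcal{S}[a]$-approximation of $Z$.
\item The induced approximations $\pi_Bs$ and $\pi_{B'}s$ of $B$ and $B'$ have cones $C_B$ and $C_{B'}$.
\item Comparing the cones of two right approximations of $Z$ gives $C_B\oplus C_{B'}\oplus S[a+1]\cong Z'\oplus S[a+1]^{\oplus 2}$.
\item The right-hand side lies in $\mathcal{S}^{[a+1,b]}$, so by induction $C_B\in\mathcal{S}^{[a+1,b]}$, and hence $B\in\mathcal{S}^{[a,b]}$.
\end{itemize}
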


\begin{proof}
The equivalence of $(1)$ and $(2)$ is obtained by \cite[Lemma 3.6]{AMY19}.
The implication $(3)\Rightarrow (2)$ follows from $\mathcal{S}_1\subseteq \mathcal{S}_1^{\perp_{>0}}$ and $\mathcal{S}_2\subseteq \mathcal{S}_2^{\perp_{>0}}$.

  For $(1)\Rightarrow (3)$, since
  $\mathcal{S}_1^{\perp_{>0}}[q]=(\mathcal{S}_1[q])^{\perp_{>0}}=(\mathcal{S}_1[q-1])^{\perp_{>-1}}=\cdots=(\mathcal{S}_1[p])^{\perp_{>p-q}},$
  we have $\mathcal{S}_1^{\perp_{>0}}[q]\subseteq(\mathcal{S}_1^{[p,q]})^{\perp_{>0}}\subseteq\mathcal{S}_2^{\perp_{>0}}$.
  On the other hand, by \cite[Lemma 3.7]{AMY19}, we get $\mathcal{S}_1\subseteq \mathcal{S}_2^{[-q,-p]}$.
  Thus, $\mathcal{S}_2^{\perp_{>0}}[-p]\subseteq(\mathcal{S}_2^{[-q,-p]})^{\perp_{>0}}\subseteq\mathcal{S}_1^{\perp_{>0}}$.
  Hence, we finish the proof.
\end{proof}

\section{Bijections between silting subcategories, $s$-torsion and cotorsion pairs}
In this section, let $\mathcal{D}$ be a triangulated category with a presilting subcategory $\mathcal{P}$ such that
\[
t=(\mathcal{D}^{\leq 0}, \mathcal{D}^{\geq 1}):=(\mathcal{P}^{\perp_{>0}}, \mathcal{P}^{\perp_{\leq 0}})
\]
is a $t$-structure in $\mathcal{D}$ with the heart denoted by $\mathcal{H}^1_{\mathcal{P}}$.
Fix a positive integer $d$, according to \cite{Z24}, the \textit{$d$-extended heart} is defined to be
\[
\mathcal{H}_{\mathcal{P}}^{d}:=\mathcal{D}^{\leq 0}\cap\mathcal{D}^{\geq -d+1}.
\]
Note that both $\mathcal{D}^{\leq 0}$ and $\mathcal{D}^{\geq -d+1}$ are closed under extensions, then so is $\mathcal{H}_{\mathcal{P}}^{d}$.
Thus, $\mathcal{H}_{\mathcal{P}}^{d}$ forms an extriangulated category, whose biadditive functor $\mathbb{E}$ is defined by
\[
\mathbb{E}(X,Y) := \operatorname{Hom}(X, Y[1])
\]
for any $X,Y \in \mathcal{H}_{\mathcal{P}}^{d}$, and the sequence $$Y\stackrel{f}{\longrightarrow}Z\stackrel{g}{\longrightarrow}X\stackrel{\delta}\dashrightarrow$$
with $X,Y,Z \in \mathcal{H}_{\mathcal{P}}^{d}$ is an \textit{extriangle} in $\mathcal{H}_{\mathcal{P}}^{d}$ if
$${Y \stackrel{f}{\longrightarrow} Z \stackrel{g}{\longrightarrow} X \stackrel{\delta}{\longrightarrow} Y[1]}$$ is a triangle in $\mathcal{D}$. Similarly, $\mathcal{P}^{[0,d]}$ is also an extriangulated category.

Set $\mathcal{K}(\mathcal{P}):={\bf thick}_{\mathcal{D}}(\mathcal{P})$. By definition, $\mathcal{P}$ is a silting subcategory of $\mathcal{K}(\mathcal{P})$. Let $\operatorname{b.co-t-str}\mathcal{K}(\mathcal{P})$ be the poset of bounded co-$t$-structures in $\mathcal{K}(\mathcal{P})$ ordered by the inclusions of torsion-free classes. By \cite[Proposition 2.23]{AI12} and \cite[Corollary 5.9]{HSSS13}, we have the following.

\begin{proposition}\label{siltisobcot}
  There is an isomorphism of posets
  \begin{align*}
\chi : \silt\mathcal{K}(\mathcal{P}) \longrightarrow \operatorname{b.co-t-str}\mathcal{K}(\mathcal{P}),~
\mathcal{S}  \longmapsto (\mathcal{S}^{(-\infty,-1]},\mathcal{S} ^{[0,+\infty)}).
\end{align*}
Moreover, $\mathcal{S} ^{[0,+\infty)}=\mathcal{S}^{\perp_{>0}}\cap\mathcal{K}(\mathcal{P}).$
\end{proposition}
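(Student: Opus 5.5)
The plan is to build the isomorphism directly from the silting subcategory and then check four things in order. First, $\chi(\mathcal{S})$ is a bounded co-$t$-structure. Second, $\chi$ is injective and surjective. Third, $\chi$ and its inverse preserve order. Fourth, the identity $\mathcal{S}^{[0,+\infty)}=\mathcal{S}^{\perp_{>0}}\cap\mathcal{K}(\mathcal{P})$ holds. This is the content of \cite[Proposition 2.23]{AI12} together with \cite[Corollary 5.9]{HSSS13}, and the plan below reconstructs the argument in the present notation.

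\textbf{Step 1: $\chi(\mathcal{S})$ is a bounded co-$t$-structure.} Take a silting subcategory $\mathcal{S}$ of $\mathcal{K}(\mathcal{P})$.
\begin{enumerate}
\item Since $\mathcal{S}$ is presilting, $\operatorname{Hom}(\mathcal{S}[-i],\mathcal{S}[j])=0$ for $i\geq 1$ and $j\geq 0$. Passing through extensions gives $\operatorname{Hom}(\mathcal{S}^{(-\infty,-1]},\mathcal{S}^{[0,+\infty)})=0$.
\item Closure under $[-1]$ of the first class is clear from the definition.
\item Because $\mathcal{S}$ is silting, ${\bf thick}(\mathcal{S})=\mathcal{K}(\mathcal{P})$. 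By the standard lemma of \cite{AI12}, this equals $\bigcup_{p\leq q}\mathcal{S}^{[p,q]}$, so every object lies in some $\mathcal{S}^{[p,q]}$.
\item Each object of $\mathcal{S}^{[p,q]}$ sits in a triangle whose ends lie in $\mathcal{S}^{[p,-1]}$ and $\mathcal{S}^{[0,q]}$. Hence $\mathcal{K}(\mathcal{P})=\mathcal{S}^{(-\infty,-1]}\ast\mathcal{S}^{[0,+\infty)}$.
\item Boundedness follows from the same covering by the $\mathcal{S}^{[p,q]}$.
\item Summand-closure of both classes follows from the standard fact that $\mathcal{S}^{[p,q]}$ is closed under summands when $\mathcal{S}=\operatorname{add}\mathcal{S}$.
\end{enumerate}

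\textbf{Step 2: the "Moreover" identity.} The inclusion $\subseteq$ is immediate from presilting. For $\supseteq$, take $X\in\mathcal{S}^{\perp_{>0}}\cap\mathcal{K}(\mathcal{P})$ and decompose it by the torsion pair as $A\to X\to B\to A[1]$, with $A\in\mathcal{S}^{(-\infty,-1]}$ and $B\in\mathcal{S}^{[0,+\infty)}$.
\begin{enumerate}
\item The morphism $A\to X$ vanishes, since $A$ is built from $\mathcal{S}[<0]$ and $X\in\mathcal{S}^{\perp_{>0}}$.
\item Therefore $X$ is a summand of $B$.
\item Since $B\in\mathcal{S}^{[0,+\infty)}$, which is summand-closed, we get $X\in\mathcal{S}^{[0,+\infty)}$.
\end{enumerate}
This also shows the torsion-free class is $\mathcal{S}^{\perp_{>0}}\cap\mathcal{K}(\mathcal{P})$.

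\textbf{Step 3: order and injectivity.} By the identity of Step 2 and Lemma~\ref{siltiff} (with $p=q=0$), the relation $\mathcal{S}_1\leq\mathcal{S}_2$ is equivalent to $\mathcal{S}_2^{\perp_{>0}}\subseteq\mathcal{S}_1^{\perp_{>0}}$, intersected with $\mathcal{K}(\mathcal{P})$. That is exactly inclusion of torsion-free classes, in the appropriate direction to match the chosen order on $\operatorname{b.co-t-str}$. Note that Lemma~\ref{siltiff} applies because both subcategories generate $\mathcal{K}(\mathcal{P})$. This gives order-reflection, hence injectivity, and order-preservation together. One can also recover $\mathcal{S}$ directly as the coheart $\mathcal{S}^{(-\infty,-1]}[1]\cap\mathcal{S}^{[0,+\infty)}$.

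\textbf{Step 4 (main obstacle): surjectivity.} Given a bounded co-$t$-structure $(\mathcal{X},\mathcal{Y})$, set $\mathcal{S}:=\mathcal{X}[1]\cap\mathcal{Y}$, its coheart.
\begin{enumerate}
\item $\mathcal{S}$ is presilting: by Remark~\ref{tsctc}(2), $(\mathcal{X}[1],\mathcal{Y})$ is a hereditary cotorsion pair, so $\operatorname{Hom}(\mathcal{X}[1],\mathcal{Y}[\geq 1])=0$.
\item The delicate part is showing $\mathcal{X}=\mathcal{S}^{(-\infty,-1]}$ and $\mathcal{Y}=\mathcal{S}^{[0,+\infty)}$, from which ${\bf thick}(\mathcal{S})=\mathcal{K}(\mathcal{P})$ follows by boundedness.
\item The method is to peel off layers. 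For $Y\in\mathcal{Y}\cap\mathcal{X}[n]$, decompose $Y[-1]$ with respect to $(\mathcal{X},\mathcal{Y})$.
\item Using the co-$t$-structure axioms (e.g.\ $\mathcal{Y}[1]\subseteq\mathcal{Y}$ and the octahedral axiom), one shows $Y$ is an extension of an object of $\mathcal{S}$ by an object of $\mathcal{Y}[1]\cap\mathcal{X}[n]$.
\item Induction on $n$ then places $Y$ in $\mathcal{S}^{[0,n-1]}$.
\end{enumerate}
Boundedness guarantees that every $Y$ lies in some $\mathcal{X}[n]$, so the induction starts. This layer-peeling is where care with shifts is required, and it is precisely \cite[Corollary 5.9]{HSSS13}, which I would cite for the detailed verification.
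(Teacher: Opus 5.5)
The paper gives no proof of this proposition; it only cites \cite[Proposition 2.23]{AI12} and \cite[Corollary 5.9]{HSSS13}. Your reconstruction therefore goes further than the paper, and Steps 1, 2 and 4 are sound, up to two minor points:
\begin{enumerate}
\item In Step 1(6), summand-closedness of $\mathcal{S}^{[p,q]}$ uses that $\mathcal{S}$ is presilting, not merely that $\mathcal{S}=\operatorname{add}\mathcal{S}$.
\item In Step 4 no octahedral axiom is needed. Decomposing $Y[-1]$ gives a triangle $X'\to Y\to Y'\to X'[1]$ with $X'\in\mathcal{X}[1]$ and $Y'\in\mathcal{Y}[1]$. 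Extension-closedness of $\mathcal{Y}$, applied to $Y'[-1]\to X'\to Y$, gives $X'\in\mathcal{S}$. Extension-closedness of $\mathcal{X}[n]$, using $X'[1]\in\mathcal{X}[2]\subseteq\mathcal{X}[n]$ for $n\geq 2$, gives $Y'\in\mathcal{X}[n]$.
\end{enumerate}

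Step 3, however, contains a concrete error. First, Lemma~\ref{siltiff} with $p=q=0$ only says $\mathcal{S}_1\le\mathcal{S}_2\le\mathcal{S}_1$ if and only if $\mathcal{S}_1^{\perp_{>0}}=\mathcal{S}_2^{\perp_{>0}}$; it gives no one-sided comparison. Second, the inclusion you wrote is backwards. By definition $\mathcal{S}_1\le\mathcal{S}_2$ means $\mathcal{S}_1\subseteq\mathcal{S}_2^{\perp_{>0}}$, and the correct equivalence is
\[
\mathcal{S}_1\le\mathcal{S}_2\iff \mathcal{S}_1^{\perp_{>0}}\cap\mathcal{K}(\mathcal{P})\subseteq\mathcal{S}_2^{\perp_{>0}}\cap\mathcal{K}(\mathcal{P}).
\]
You can test this on $\mathcal{S}[1]\le\mathcal{S}$, where $(\mathcal{S}[1])^{\perp_{>0}}=\mathcal{S}^{\perp_{\geq 0}}\subseteq\mathcal{S}^{\perp_{>0}}$. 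With your direction, $\chi$ would be order-\emph{reversing} for the order by inclusion of torsion-free classes, so ``in the appropriate direction'' hides a sign error rather than settling it.

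The repair is short and uses your Step 2:
\begin{enumerate}
\item If $\mathcal{S}_1\le\mathcal{S}_2$, then $\mathcal{S}_1\subseteq\mathcal{S}_2^{\perp_{>0}}\cap\mathcal{K}(\mathcal{P})=\mathcal{S}_2^{[0,+\infty)}$. Since the latter is extension-closed and stable under $[1]$, you get $\mathcal{S}_1^{[0,+\infty)}\subseteq\mathcal{S}_2^{[0,+\infty)}$.
\item Conversely, $\mathcal{S}_1\subseteq\mathcal{S}_1^{[0,+\infty)}\subseteq\mathcal{S}_2^{[0,+\infty)}\subseteq\mathcal{S}_2^{\perp_{>0}}$ gives $\mathcal{S}_1\le\mathcal{S}_2$.
\end{enumerate}
Injectivity then follows from antisymmetry, or from your coheart recovery $\mathcal{S}=\mathcal{S}^{(-\infty,-1]}[1]\cap\mathcal{S}^{[0,+\infty)}$, which is correct.
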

Let $(d+1)$-${\rm silt}\,\mathcal{P}$ be the poset of the silting subcategories of $\mathcal{K}(\mathcal{P})$ contained in $\mathcal{P}^{[0,d]}$ under the order defined as \eqref{siltingorder}. In what follows, each subcategory in $(d+1)$-${\rm silt}\,\mathcal{P}$ is called the $(d+1)$-term silting subcategory with respect to $\mathcal{P}$.
\begin{proposition}\label{dsisohc}
  There is an isomorphism of posets
  \begin{align*}
    \psi : (d+1)\operatorname{-silt}\mathcal{P} \longrightarrow \operatorname{h.cotors}\mathcal{P}^{[0,d]},~
    \mathcal{S}  \longmapsto \mathcal{S} ^{[0,+\infty)} \cap \mathcal{P}^{[0,d]}.
  \end{align*}
\end{proposition}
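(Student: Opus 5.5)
Via Proposition \ref{siltisobcot}, a silting subcategory $\mathcal{S}$ of $\mathcal{K}(\mathcal{P})$ is the same as a bounded co-$t$-structure $(\mathcal{S}^{(-\infty,-1]},\mathcal{S}^{[0,+\infty)})$, and by Remark \ref{tsctc}(2) this is a hereditary cotorsion pair $(\mathcal{S}^{(-\infty,0]},\mathcal{S}^{[0,+\infty)})$ in $\mathcal{K}(\mathcal{P})$. The plan is to show that, for $(d+1)$-term $\mathcal{S}$, this pair restricts to a hereditary cotorsion pair in $\mathcal{P}^{[0,d]}$, and that every hereditary cotorsion pair in $\mathcal{P}^{[0,d]}$ arises this way. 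The cotorsion pair attached to $\mathcal{S}$ will be
\[
(\mathcal{U},\mathcal{V})=\bigl(\mathcal{S}^{(-\infty,0]}\cap\mathcal{P}^{[0,d]},\ \mathcal{S}^{[0,+\infty)}\cap\mathcal{P}^{[0,d]}\bigr).
\]

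\textbf{Well-definedness.} Let $\mathcal{S}\subseteq\mathcal{P}^{[0,d]}$. By Lemma \ref{siltiff}, $\mathcal{P}[d]\leq\mathcal{S}\leq\mathcal{P}$ and $\mathcal{P}^{\perp_{>0}}[d]\subseteq\mathcal{S}^{\perp_{>0}}\subseteq\mathcal{P}^{\perp_{>0}}$. The symmetric form of \cite[Lemma 3.7]{AMY19} gives $\mathcal{P}\subseteq\mathcal{S}^{[-d,0]}$, so $\mathcal{P}[d]\subseteq\mathcal{S}^{[0,d]}$.
\begin{itemize}
\item The Hom-vanishings $\operatorname{Hom}(\mathcal{U},\mathcal{V}[\geq1])=0$ are inherited from $\mathcal{K}(\mathcal{P})$.
\item For $B\in\mathcal{P}^{[0,d]}$, take the co-$t$-structure decomposition $V\to U\to B\to V[1]$ with $U\in\mathcal{S}^{(-\infty,0]}$ and $V\in\mathcal{S}^{[1,+\infty)}$. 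I would refine this so that $U\in\mathcal{S}^{[0,0]}=\mathcal{S}$ and $V\in\mathcal{S}^{[1,d+1]}$; this should hold because $B\in\mathcal{P}^{[0,d]}\subseteq\mathcal{S}^{[0,d]}$, using $\mathcal{P}\subseteq\mathcal{S}^{[-d,0]}$ together with the containment $\mathcal{P}^{[0,d]}\subseteq\mathcal{S}^{[-d,d]}$.
\item It remains to show $V\in\mathcal{P}^{[0,d]}$. By rotation, $V[1]$ lies in $\mathcal{P}^{[0,d]}[1]*\mathcal{P}^{[0,d]}\subseteq\mathcal{P}^{[0,d+1]}$, and from $V\in\mathcal{S}^{\perp_{>0}}\subseteq\mathcal{P}^{\perp_{>0}}$ one gets $V\in\mathcal{P}^{[0,d]}$. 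Here I use the description $\mathcal{P}^{[0,d]}=\mathcal{P}^{\perp_{>0}}\cap{}^{\perp_{>0}}(\mathcal{P}[d])\cap\mathcal{K}(\mathcal{P})$, which I would check via $\mathcal{P}^{[0,+\infty)}=\mathcal{P}^{\perp_{>0}}\cap\mathcal{K}(\mathcal{P})$ from Proposition \ref{siltisobcot} and its dual.
\end{itemize}
The dual triangle $B\to V'\to U'\to B[1]$ is handled symmetrically: the left half $\mathcal{S}^{(-\infty,0]}\cap\mathcal{P}^{[0,d]}$ is controlled by ${}^{\perp_{>0}}(\mathcal{P}[d])$ and $\mathcal{P}[d]\leq\mathcal{S}$. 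Since the two triangles exist and the Hom-vanishings hold, $(\mathcal{U},\mathcal{V})$ is a hereditary cotorsion pair, and by uniqueness of the complement $\mathcal{U}={}^{\perp_1}\mathcal{V}\cap\mathcal{P}^{[0,d]}$.

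\textbf{Injectivity and order.} The class $\mathcal{U}\cap\mathcal{V}$ contains $\mathcal{S}$. Conversely, an object of $\mathcal{U}\cap\mathcal{V}$ lies in $\mathcal{S}^{(-\infty,0]}\cap\mathcal{S}^{[0,+\infty)}$; this is contained in $\mathcal{S}$, since the coheart of a bounded co-$t$-structure from $\mathcal{S}$ is $\mathcal{S}$. Hence $\mathcal{S}=\mathcal{U}\cap\mathcal{V}=\psi(\mathcal{S})\cap{}^{\perp_1}\psi(\mathcal{S})$, which gives injectivity. For the order, $\mathcal{S}_1\leq\mathcal{S}_2$ means $\mathcal{S}_1\subseteq\mathcal{S}_2^{\perp_{>0}}$, which is equivalent to $\mathcal{S}_1^{\perp_{>0}}\subseteq\mathcal{S}_2^{\perp_{>0}}$ by Proposition \ref{siltisobcot}. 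Intersecting with $\mathcal{P}^{[0,d]}$ preserves the inclusion. Conversely, if $\psi(\mathcal{S}_1)\subseteq\psi(\mathcal{S}_2)$, then $\mathcal{S}_1\subseteq\psi(\mathcal{S}_1)\subseteq\mathcal{S}_2^{\perp_{>0}}$.

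\textbf{Surjectivity (main obstacle).} Given a hereditary complete cotorsion pair $(\mathcal{U},\mathcal{V})$ in $\mathcal{P}^{[0,d]}$, set $\mathcal{S}:=\mathcal{U}\cap\mathcal{V}$.
\begin{enumerate}
\item \emph{Presilting.} This follows from heredity, $\operatorname{Hom}(\mathcal{U},\mathcal{V}[\geq1])=0$.
\item \emph{Useful membership.} Since $\mathcal{P}^{[0,d]}[1]$-type objects vanish against $\mathcal{P}[d]$, we have $\mathcal{P}[d]\subseteq{}^{\perp_1}\mathcal{B}$-like classes; in particular $\mathcal{P}[d]\in\mathcal{V}$, and dually $\mathcal{P}\in\mathcal{U}$.
\item \emph{Resolving $\mathcal{P}$.} Starting from $P\in\mathcal{P}$, repeatedly take triangles $P\to V_0\to U_0\to P[1]$ and then resolve the $U_i$. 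Each step stays inside $\mathcal{P}^{[0,d]}$ with cones shifted, so after $d$ steps one should reach $\mathcal{U}\cap\mathcal{V}$. This would give $\mathcal{P}\subseteq\mathcal{S}^{[-d,0]}$, hence $\mathbf{thick}(\mathcal{S})=\mathcal{K}(\mathcal{P})$ and $\mathcal{S}$ is silting.
\item \emph{$\psi(\mathcal{S})=\mathcal{V}$.} I would prove $\mathcal{V}=\mathcal{S}^{\perp_{>0}}\cap\mathcal{P}^{[0,d]}$ by resolving an object of $\mathcal{V}$ by $\mathcal{S}$, using heredity and completeness.
\end{enumerate}
The delicate point is the termination argument in step 3: controlling the terms by $\mathcal{P}^{[0,d]}$ and using heredity to force them into $\mathcal{S}$ after $d$ steps. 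This is where I expect the real work.
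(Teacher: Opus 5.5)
Your route differs from the paper's and can be made to work, but two steps do not go through as written.

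\textbf{Completeness of the restricted pair.} The refinement ``$U\in\mathcal{S}$, $V\in\mathcal{S}^{[1,d+1]}$'' is false. So is the containment $\mathcal{P}^{[0,d]}\subseteq\mathcal{S}^{[0,d]}$ you appeal to: by Lemma \ref{siltiff} it would force $\mathcal{P}\leq\mathcal{S}$, hence $\mathcal{S}=\mathcal{P}$. Only $\mathcal{P}^{[0,d]}\subseteq\mathcal{S}^{[-d,d]}$ holds. For example, take $\mathcal{S}=\mathcal{P}[d]$ and $0\neq B=P\in\mathcal{P}$. Such a triangle would give $P\in\mathcal{S}\ast\mathcal{S}^{[1,+\infty)}\subseteq\mathcal{S}^{\perp_{>0}}$, contradicting $\operatorname{Hom}(P[d],P[d])\neq 0$.

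Your argument for $V\in\mathcal{P}^{[0,d]}$ starts from $U\in\mathcal{P}^{[0,d]}$, which you only obtained from this refinement. So the existence of the approximation triangles inside $\mathcal{P}^{[0,d]}$ is not established. No refinement is needed, though. From $\mathcal{S}[-k]\subseteq\mathcal{P}^{[-k,d-k]}$ you get $\mathcal{S}^{(-\infty,0]}\subseteq\mathcal{P}^{(-\infty,d]}$, and also $\mathcal{S}^{[0,+\infty)}\subseteq\mathcal{P}^{[0,+\infty)}$. Both $\mathcal{P}^{(-\infty,d]}$ and $\mathcal{P}^{[0,+\infty)}$ are extension-closed, with intersection $\mathcal{P}^{[0,d]}$.
\begin{itemize}
\item For $V\to U\to B\to V[1]$ from the co-$t$-structure, note $U\in\mathcal{S}^{(-\infty,0]}$ and $V\in\mathcal{S}^{[0,+\infty)}$ (not $\mathcal{S}^{[1,+\infty)}$). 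Then $U\in V\ast B\subseteq\mathcal{P}^{[0,+\infty)}$ gives $U\in\mathcal{P}^{[0,d]}$. Next, $V\in B[-1]\ast U\subseteq\mathcal{P}^{(-\infty,d]}$ gives $V\in\mathcal{P}^{[0,d]}$.
\item For $B\to V'\to U'\to B[1]$, use $V'\in B\ast U'$ and $U'\in V'\ast B[1]$ in the same way.
\end{itemize}

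\textbf{Surjectivity.} The termination in step 3 is only asserted (``one should reach''), and staying inside $\mathcal{P}^{[0,d]}$ is not what makes it stop. Step 4 also misses the point. Resolving objects of $\mathcal{V}$ can at best give $\mathcal{V}\subseteq\mathcal{S}^{\perp_{>0}}$, which is immediate from heredity and $\mathcal{S}\subseteq\mathcal{U}$. The real content is $\mathcal{S}^{\perp_{>0}}\cap\mathcal{P}^{[0,d]}\subseteq\mathcal{V}$.

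Both gaps close via one claim: $\mathcal{U}\subseteq\mathcal{S}^{[-d,0]}$. Prove it by running your resolution on an arbitrary $X\in\mathcal{U}$, not only on $\mathcal{P}$.
\begin{itemize}
\item Put $U_{-1}=X$ and take triangles $U_{j-1}\to V_j\to U_j\to U_{j-1}[1]$ with $V_j\in\mathcal{V}$ and $U_j\in\mathcal{U}$.
\item Then $V_j\in\mathcal{U}$ by extension-closure, so $V_j\in\mathcal{S}$.
\item Heredity gives $\operatorname{Hom}(\mathcal{U},V_j[\geq 1])=0$. Hence $\operatorname{Hom}(\mathcal{U},U_j[k])$ embeds in $\operatorname{Hom}(\mathcal{U},U_{j-1}[k+1])$ for $k\geq 1$.
\item Start from $\operatorname{Hom}(\mathcal{P}^{[0,d]},\mathcal{P}^{[0,d]}[\geq d+1])=0$. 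Induction gives $\operatorname{Hom}(\mathcal{U},U_j[\geq d-j])=0$.
\item So $U_{d-1}\in\mathcal{U}^{\perp_1}\cap\mathcal{P}^{[0,d]}=\mathcal{V}$, i.e.\ $U_{d-1}\in\mathcal{S}$. Therefore $X\in\mathcal{S}[-d]\ast\cdots\ast\mathcal{S}=\mathcal{S}^{[-d,0]}$.
\end{itemize}
Now $\mathcal{P}\subseteq\mathcal{U}$ shows $\mathcal{S}$ is silting. And $\mathcal{U}\subseteq\mathcal{S}^{(-\infty,0]}$ gives $\mathcal{S}^{\perp_{>0}}\cap\mathcal{P}^{[0,d]}\subseteq\mathcal{U}^{\perp_1}\cap\mathcal{P}^{[0,d]}=\mathcal{V}$.

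The rest of your argument is fine: the Hom-vanishing, injectivity via $\mathcal{U}\cap\mathcal{V}=\mathcal{S}$, and the order comparison.

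Once repaired, your proof is a self-contained alternative to the paper's. The paper restricts $\chi$ to bounded co-$t$-structures lying between $\mathcal{P}^{[d,+\infty)}$ and $\mathcal{P}^{[0,+\infty)}$, and transports cotorsion pairs using [AT22, Corollary 3.9]. It then gets silting-ness from Remark \ref{tsctc}(2) and Proposition \ref{siltisobcot}. Your version avoids that citation and gives the inverse explicitly as $(\mathcal{U},\mathcal{V})\mapsto\mathcal{U}\cap\mathcal{V}$.
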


\begin{proof}
By Proposition \ref{siltisobcot} and Lemma \ref{siltiff}, the map $\chi$ restricts to a poset isomorphism:
\[
\chi : (d+1)\operatorname{-silt}\mathcal{P} \stackrel{\sim}{\longrightarrow} \operatorname{b.co-t-stfs}\,[0,d],
\]
where $\operatorname{b.co-t-stfs}\,[0,d]$ denotes the poset
\[
  \{\mathcal{Y}~|~({^{\perp_{0}}}\mathcal{Y}\cap\mathcal{K}(\mathcal{P}),\mathcal{Y}) \,\text{is a bounded co-}t\text{-structure in} \,\mathcal{K}(\mathcal{P})~\text{s.t.}~\mathcal{P}^{[d,+\infty)}\subseteq \mathcal{Y}\subseteq \mathcal{P}^{[0,+\infty)}\}
\]
under inclusion.
Note that $\mathcal{P}^{[0,d]}=\mathcal{P}^{(-\infty,d]}\cap\mathcal{P}^{[0,+\infty)}$.
Applying \cite[Corollary 3.9]{AT22} to $(\mathcal{P}^{(-\infty,d]},\mathcal{P}^{[d+1,+\infty)})$ and $(\mathcal{P}^{(-\infty,-1]},\mathcal{P}^{[0,+\infty)})$,
we obtain a poset isomorphism
\begin{align*}
\xi: \operatorname{cotors}\,[0,d] \stackrel{\sim}{\longrightarrow} \operatorname{cotors}\mathcal{P}^{[0,d]},~
\mathcal{\mathcal{Y}}\longmapsto \mathcal{Y} \cap \mathcal{P}^{[0,d]}
\end{align*}
with the inverse map $\eta$ defined by $\mathcal{\mathcal{V}}  \mapsto \operatorname{add}(\mathcal{V}\ast \mathcal{P}^{[d,+\infty)})$,
where $\operatorname{cotors}\,[0,d]$ denotes the poset
\[\{\mathcal{Y}~|~({^{\perp_{1}}}\mathcal{Y}\cap\mathcal{K}(\mathcal{P}),\mathcal{Y})~\text{is a cotorsion pair in} ~\mathcal{K}(\mathcal{P})
\text{ s.t.}~\mathcal{P}^{[d,+\infty)}\subseteq \mathcal{Y}\subseteq \mathcal{P}^{[0,+\infty)}\}\]
under inclusion and ${^{\perp_1}}\eta(\mathcal{V} )\cap\mathcal{K}(\mathcal{P})=\operatorname{add}(\mathcal{P}^{(-\infty,0]}\ast ({^{\perp_1}\mathcal{V}}\cap\mathcal{P}^{[0,d]}))$.

For any $(d+1)$-term silting subcategory $\mathcal{S}$, the image $\xi (\chi (\mathcal{S}))=\mathcal{S} ^{[0,+\infty)} \cap \mathcal{P}^{[0,d]}$ is hereditary. Denote by $\xi'$ the restriction of $\xi$ on $\operatorname{b.co-t-stfs}\,[0,d]$ and set $\psi=\xi'\circ\chi$, i.e. we have the following commutative diagram
\[
\xymatrix{
(d+1)\operatorname{-silt}\mathcal{P} \ar[rr]^-{\chi}_-{\cong} \ar[dr]_-{\psi}
& & \operatorname{b.co-t-stfs}\,[0,d]\ar@{>->}[dl]^-{\xi'}\\
& \operatorname{h.cotors}\mathcal{P}^{[0,d]}
}
\]

Let $(\mathcal{U}, \mathcal{V})$ be a hereditary cotorsion pair in $\mathcal{P}^{[0,d]}$. Then
\[(\operatorname{add}(\mathcal{P}^{(-\infty,0]}\ast \mathcal{U}), \operatorname{add}(\mathcal{V}\ast \mathcal{P}^{[d,+\infty)}))\]
is a cotorsion pair satisfying
\[
\mathcal{P}^{[d,+\infty)}\subseteq \operatorname{add}(\mathcal{V}\ast \mathcal{P}^{[d,+\infty)})\subseteq \mathcal{P}^{[0,+\infty)}
\]
and $\xi(\operatorname{add}(\mathcal{V}\ast\mathcal{P}^{[d,+\infty)}))= \xi(\eta(\mathcal{V}))=\mathcal{V}$.

By the heredity of $(\mathcal{U},\mathcal{V})$, we obtain \[
(\operatorname{add}(\mathcal{P}^{(-\infty,0]}\ast \mathcal{U}), \operatorname{add}(\mathcal{V}\ast \mathcal{P}^{[d,+\infty)}))
\]
is hereditary. Thus, by Remark \ref{tsctc} $(2)$, we conclude that \begin{equation}\label{ytjg}
(\operatorname{add}(\mathcal{P}^{(-\infty,0]}\ast \mathcal{U})[-1], \operatorname{add}(\mathcal{V}\ast \mathcal{P}^{[d,+\infty)}))\end{equation}
is a co-$t$-structure in $\mathcal{K}(\mathcal{P})$.
Since
\begin{align*}
  \bigcup_{n \in \mathbb{Z}} \mathrm{add}( \mathcal{P}^{(-\infty,0]} \ast \mathcal{U})[n] \supseteq \bigcup_{n \in \mathbb{Z}} \mathcal{P}^{(-\infty,n]} = \mathcal{K} (\mathcal{P}),\\
  \bigcup_{n \in \mathbb{Z}} \mathrm{add}( \mathcal{V} \ast \mathcal{P}^{[0,+\infty)})[n] \supseteq \bigcup_{n \in \mathbb{Z}} \mathcal{P}^{[n,+\infty)} = \mathcal{K} (\mathcal{P}),
\end{align*}
we get that the co-$t$-structure \eqref{ytjg} is bounded.
{Thus, we obtain
$$\mathrm{add}( \mathcal{V} \ast \mathcal{P}^{[0,+\infty)})\in \operatorname{b.co-t-stfs}\,[0,d]$$
and}
$$\xi'(\operatorname{add}(\mathcal{V}\ast\mathcal{P}^{[d,+\infty)}))=\xi(\operatorname{add}(\mathcal{V}\ast\mathcal{P}^{[d,+\infty)}))= \mathcal{V}.$$
That is, $\xi'$ is surjective and thus is an isomorphism. Hence, $\psi$ is also an isomorphism.
\end{proof}

We recall the following definition of $\mathcal{P}$-presentations for objects in $\mathcal{H}_{\mathcal{P}}^d$ from \cite{WZ25}.
\begin{definition}
For any object \(M \in \mathcal{H}_{\mathcal{P}}^d\),  a \textit{\(\mathcal{P}\)-presentation} of $M$ is an object $S\in\mathcal{P}^{[0,d]}$ satisfying $\tau_{\geq -d+1}S \cong M$.
A \textit{\(\mathcal{P}\)-presentation} of a subcategory \(\mathcal{M} \subseteq \mathcal{H}_{\mathcal{P}}^d\) is a subcategory $\mathcal{S}\subseteq\mathcal{P}^{[0,d]}$ satisfying \(\mathrm{add}(\tau_{\geq -d+1}\mathcal{S}) = \mathcal{M}\),
where $\tau_{\geq -d+1}\mathcal{S}:=\{\tau_{\geq -d+1}S \mid S\in\mathcal{S}\}$.
\end{definition}

\begin{lemma}\cite[Lemma 2.3]{WZ25}\label{Ppres}
  If \( \mathcal{P}\) is contravariantly finite in \(\mathcal{D}\), every object $M\in\mathcal{H}_{\mathcal{P}}^{d}$ admits a $\mathcal{P}$-presentation.
\end{lemma}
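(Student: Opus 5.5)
Given $M\in\mathcal{H}_{\mathcal{P}}^{d}$, I will build $S\in\mathcal{P}^{[0,d]}$ with $\tau_{\geq -d+1}S\cong M$ by iterated right $\mathcal{P}$-approximations, i.e.\ a projective resolution of length $d$ in which the last term is a cone. Set $M_0:=M$ and use the contravariant finiteness of $\mathcal{P}$ in $\mathcal{D}$ to choose right $\mathcal{P}$-approximations step by step. Take $P_0\to M_0$ and complete it to a triangle $K_1\to P_0\to M_0\to K_1[1]$. Then take a right $\mathcal{P}$-approximation $P_1\to K_1$, set $K_2:=\operatorname{cone}$ shifted appropriately, and so on. After $d$ steps I obtain an object $S$ as an iterated cone, so that $S\in\mathcal{P}\ast\mathcal{P}[1]\ast\cdots\ast\mathcal{P}[d]=\mathcal{P}^{[0,d]}$. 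Concretely, I will instead build the tower of triangles $S_{k}\to M\to C_k\to S_k[1]$ with $S_k\in\mathcal{P}^{[0,k]}$ and $C_k\in\mathcal{D}^{\leq -k-1}$. This is possible because $\operatorname{Hom}(\mathcal{P},C_k[>0])$ and the approximation control which cohomological degrees survive.

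The key steps are as follows. First, the base case: take a right $\mathcal{P}$-approximation $f_0:P_0\to M$ and its cone $C_0$. Since $M\in\mathcal{D}^{\leq 0}=\mathcal{P}^{\perp_{>0}}$ and $\mathcal{P}$ is presilting, applying $\operatorname{Hom}(\mathcal{P},-)$ shows that $\operatorname{Hom}(\mathcal{P},C_0[i])=0$ for $i\geq 0$. Surjectivity of $f_0$ on $\operatorname{Hom}(\mathcal{P},-)$ handles $i=0$, and the vanishing of $\operatorname{Hom}(\mathcal{P},M[>0])$ and $\operatorname{Hom}(\mathcal{P},P_0[>0])$ handles the rest. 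Hence $C_0\in\mathcal{D}^{\leq -1}$. Second, the inductive step: given $C_k\in\mathcal{D}^{\leq -k-1}$, take a right $\mathcal{P}[k+1]$-approximation of $C_k$, namely $P_{k+1}[k+1]\to C_k$, and use the octahedral axiom to obtain $S_{k+1}\in S_k\ast P_{k+1}[k+1]\subseteq\mathcal{P}^{[0,k+1]}$ with cone $C_{k+1}\in\mathcal{D}^{\leq -k-2}$. The same Hom computation applies after shifting by $[k+1]$. Third, stop at $k=d$ with $S:=S_d$, which gives a triangle $S\to M\to C_d\to S[1]$ with $C_d\in\mathcal{D}^{\leq -d-1}$.

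Finally, I rotate this triangle to $C_d[-1]\to S\to M\to C_d$. Here $C_d[-1]\in\mathcal{D}^{\leq -d}$ and $M\in\mathcal{D}^{\geq -d+1}$. By the uniqueness of $t$-decompositions (Remark \ref{tsctc}) with respect to $(\mathcal{D}^{\leq -d},\mathcal{D}^{\geq -d+1})$, this gives $\tau_{\geq -d+1}S\cong M$, as required.

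The main obstacle is the Hom computation in the inductive step. I need to verify that $C_{k+1}$ lies in $\mathcal{P}^{\perp_{>-k-2}}$, i.e.\ that $\operatorname{Hom}(\mathcal{P},C_{k+1}[i])=0$ for all $i\geq -k-1$, and not merely for large $i$. This uses three facts: that $\operatorname{Hom}(\mathcal{P},P_{k+1}[k+1+i])=0$ for $i>-k-1$, that $C_k\in\mathcal{P}^{\perp_{>-k-1}}$, and that the approximation is surjective exactly at degree $i=-k-1$. I would also double-check the shift convention, namely that $\mathcal{D}^{\leq n}=\mathcal{D}^{\leq 0}[-n]=\mathcal{P}^{\perp_{>n}}$, so that $\mathcal{D}^{\leq -k-1}=\mathcal{P}^{\perp_{>-k-1}}$. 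With this convention the degrees line up as described.
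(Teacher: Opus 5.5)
Your proof is correct. It is essentially the construction the paper recalls from \cite{WZ25} and \cite{MP25} in Section 4 (diagram \eqref{ppst}): your $C_k$ is the shifted syzygy $\Omega^{k+1}(M)[k+1]$; the octahedral axiom assembles the $d+1$ right approximations into a triangle $S\to M\to \Omega^{d+1}(M)[d+1]\to S[1]$; and uniqueness of the $t$-decomposition with respect to $(\mathcal{D}^{\leq -d},\mathcal{D}^{\geq -d+1})$ gives $\tau_{\geq -d+1}S\cong M$. The only difference is bookkeeping: you build $S_k\in\mathcal{P}^{[0,k]}$ step by step, whereas the paper first composes the connecting morphisms $h=h^d[d]\circ\cdots\circ h^0$ and then takes the cocone $p(M)$.
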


Now, we can show that the truncation functor induces an equivalence of additive categories.
This is a generalization of \cite[Proposition 3.1]{G24} and \cite[Theorem 2.5]{MP25}.

\begin{proposition}\label{PequH}
If $\mathcal{P}$ is contravariantly finite in $\mathcal{D}$,
then the functor
\[\tau_{\geq -d+1}:\mathcal{P}^{[0,d]}\stackrel{}{\longrightarrow}\mathcal{H}_{\mathcal{P}}^d\]
induces an equivalence of additive categories
\begin{equation}\label{tauhz}
\tau_{\geq -d+1}:\ \frac{\mathcal{P}^{[0,d]}}{\mathcal{P}[d]} \stackrel{\sim}{\longrightarrow} \mathcal{H}_{\mathcal{P}}^d.
\end{equation}
\end{proposition}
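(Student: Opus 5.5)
We need three things: the functor lands in $\mathcal{H}^d_{\mathcal{P}}$ and kills $\mathcal{P}[d]$, it is dense, and the induced functor on the quotient is fully faithful. Throughout we use the $t$-structure $(\mathcal{D}^{\leq -d},\mathcal{D}^{\geq -d+1})$ and its truncation triangle $\tau_{\leq -d}S\to S\to\tau_{\geq -d+1}S\to$.

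\emph{Well-definedness and vanishing on $\mathcal{P}[d]$.} Since $\mathcal{P}$ is presilting, $\mathcal{P}[i]\subseteq\mathcal{P}^{\perp_{>0}}=\mathcal{D}^{\leq 0}$ for $i\geq 0$. Hence $\mathcal{P}^{[0,d]}\subseteq\mathcal{D}^{\leq 0}$ by extension closure, and $\mathcal{P}[d]\subseteq\mathcal{D}^{\leq -d}$. For $S\in\mathcal{P}^{[0,d]}$ the truncation $\tau_{\geq -d+1}S$ is a cone of $\tau_{\leq -d}S\to S$ with both terms in $\mathcal{D}^{\leq 0}$, so it lies in $\mathcal{D}^{\leq 0}\cap\mathcal{D}^{\geq -d+1}=\mathcal{H}^d_{\mathcal{P}}$. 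Moreover $\tau_{\geq -d+1}$ vanishes on $\mathcal{D}^{\leq -d}\supseteq\mathcal{P}[d]$. So the additive functor factors through the ideal quotient by morphisms factoring through $\mathcal{P}[d]$.

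\emph{Density and fullness.} Density is exactly Lemma \ref{Ppres}. For fullness, let $S,S'\in\mathcal{P}^{[0,d]}$ and $g:\tau_{\geq -d+1}S\to\tau_{\geq -d+1}S'$. Write $\pi:S\to\tau_{\geq-d+1}S$ and $\pi'$ similarly. We want to lift $g\pi:S\to\tau_{\geq -d+1}S'$ through $\pi'$. The obstruction lies in $\operatorname{Hom}(S,\tau_{\leq -d}S'[1])$. Since $\mathcal{P}^{[0,d]}=\mathcal{P}\ast\mathcal{P}^{[1,d]}$ is built from $\mathcal{P}[i]$ with $0\leq i\leq d$, it suffices to show $\operatorname{Hom}(\mathcal{P}[i],\mathcal{D}^{\leq -d-1})=0$ for $0\le i\le d$. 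That is, for $Y\in\mathcal{D}^{\leq -d-1}$ we need $\operatorname{Hom}(\mathcal{P},Y[-i])=0$, i.e.\ $Y[-i]\in\mathcal{D}^{\geq 1}$. But $Y[-i]\in\mathcal{D}^{\leq -d-1+i}$, which is not in $\mathcal{D}^{\geq 1}$. So I must instead use that $\tau_{\leq -d}S'$ itself is special. The triangle $S'[-1]\to\tau_{\geq-d+1}S'[-1]\to\tau_{\leq -d}S'\to S'$ gives $\tau_{\leq -d}S'\in\mathcal{P}^{[-1,d-1]}\ast\mathcal{H}^d_{\mathcal{P}}[-1]$. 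Intersecting with $\mathcal{D}^{\leq -d}$, I expect to show $\tau_{\leq -d}S'\in\mathcal{P}[d]\ast\mathcal{D}^{\leq -d-1}$, or more cleanly that $\operatorname{Hom}(S,\tau_{\leq-d}S'[1])=0$ directly: $\operatorname{Hom}(\mathcal{P}[i],\tau_{\leq -d}S'[1])=\operatorname{Hom}(\mathcal{P},\tau_{\le -d}S'[1-i])$. Applying $\operatorname{Hom}(\mathcal{P},-)$ to the triangle above, this is sandwiched between $\operatorname{Hom}(\mathcal{P},\tau_{\geq -d+1}S'[-i])$ and $\operatorname{Hom}(\mathcal{P},S'[1-i])$. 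The first vanishes for $i\geq 0$ after checking degrees via $\tau_{\geq-d+1}S'\in\mathcal{D}^{\geq -d+1}$ only when $-i\le\ldots$, and the second vanishes for $i\le 0$ only. So this needs care. The cleanest route, which I would try first, is to replace truncation by a presilting d\'evissage: filter $S\in\mathcal{P}^{[0,d]}$ by triangles $S_{<d}\to S\to P_d[d]\to$ with $S_{<d}\in\mathcal{P}^{[0,d-1]}$, and show that $\tau_{\geq -d+1}S$ is controlled by $S_{<d}$ and the map to $P_d[d]$. Then prove fullness by induction on $d$, using $\operatorname{Hom}(\mathcal{P}^{[0,d-1]},\mathcal{P}[d][1])=0$-type vanishing from presilting.

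\emph{Faithfulness.} If $f:S\to S'$ satisfies $\tau_{\geq -d+1}f=0$, then $\pi'f=0$, so $f$ factors through $\tau_{\leq -d}S'\to S'$. It remains to show that any map $S\to\tau_{\leq -d}S'$ factors through $\mathcal{P}[d]$. I would take a right $\operatorname{add}\mathcal{P}[d]$-approximation (available by contravariant finiteness) and show $\operatorname{Hom}(S,-)$ kills its cone, again using that $S\in\mathcal{P}^{[0,d]}$ and $\mathcal{P}^{\perp_{>0}}$-vanishing.

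The main obstacle is the fullness/faithfulness Hom-vanishing: pinning down the precise shape of $\tau_{\leq -d}S'$ (roughly $\mathcal{P}[d]$ plus something in $\mathcal{D}^{\leq -d-1}$ invisible to $S$). This is where contravariant finiteness of $\mathcal{P}$ and the Wei--Zhou presentation technique would be invoked.
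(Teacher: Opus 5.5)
There is a genuine gap, caused by one wrong degree computation. In the fullness step you correctly reduce to $\operatorname{Hom}(\mathcal{P}[i],\mathcal{D}^{\leq -d-1})=0$ for $0\le i\le d$. You then claim that $\operatorname{Hom}(\mathcal{P},Y[-i])=0$ would require $Y[-i]\in\mathcal{D}^{\geq 1}$. It does not: $\mathcal{D}^{\geq 1}=\mathcal{P}^{\perp_{\leq 0}}$ asks for vanishing in all degrees $\leq 0$, while you only need degree $0$.

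Because the aisle is defined by $\mathcal{P}$, one has $\mathcal{D}^{\leq -n}=\mathcal{D}^{\leq 0}[n]=\mathcal{P}^{\perp_{>-n}}$. For $n=d+1$ this gives $\operatorname{Hom}(\mathcal{P}[i],Y)=\operatorname{Hom}(\mathcal{P},Y[-i])=0$ for every $Y\in\mathcal{D}^{\leq -d-1}$ whenever $-i\geq -d$. Hence $\operatorname{Hom}(\mathcal{P}^{[0,d]},\mathcal{D}^{\leq -d-1})=0$, and the obstruction group $\operatorname{Hom}(S,\tau_{\leq -d}S'[1])$ vanishes. So $g\pi=\pi'h$ for some $h\colon S\to S'$. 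Moreover $\tau_{\geq -d+1}h=g$, because $\pi^\ast\colon\operatorname{Hom}(\tau_{\geq -d+1}S,\tau_{\geq -d+1}S')\to\operatorname{Hom}(S,\tau_{\geq -d+1}S')$ is bijective by the adjunction $\tau_{\geq -d+1}\dashv\sigma_{\geq -d+1}$. This is exactly the paper's argument.

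Having discarded it, you leave fullness resting on an inductive d\'evissage that is never carried out. Your closing remark that the $\mathcal{D}^{\leq -d-1}$ part is ``invisible to $S$'' also contradicts your earlier claim. As written, fullness is not proved.

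Faithfulness is likewise only sketched, and the sketch is internally inconsistent. Your approximation plan can be completed: the cone $C$ of a right $\operatorname{add}\mathcal{P}[d]$-approximation of $\tau_{\leq -d}S'$ does lie in $\mathcal{D}^{\leq -d-1}$. But finishing it requires $\operatorname{Hom}(S,C)=0$, which is the very vanishing you said fails.

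The approximation is also unnecessary. With $n=d$ the formula above gives $\operatorname{Hom}(\mathcal{P}^{[0,d-1]},\mathcal{D}^{\leq -d})=0$. Take a triangle $S^{[0,d-1]}\stackrel{\iota}{\to} S\to S^d[d]\to S^{[0,d-1]}[1]$. For any $g\colon S\to\tau_{\leq -d}S'$, the composite $g\iota$ is zero, so $g$ factors through $S\to S^d[d]\in\mathcal{P}[d]$. The paper does the equivalent thing by showing $f\iota=0$ through the injectivity of $p_\ast$.

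Contravariant finiteness of $\mathcal{P}$ is needed only for density, via Lemma \ref{Ppres}. Your well-definedness and density parts are fine.
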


\begin{proof}
For any $R,\,Q\in\mathcal{P}^{[0,d]}$, consider the $t$-decompositions of $R$ and $Q$:
\begin{flalign}
&\tau_{\leq -d}R \stackrel{}{\longrightarrow} R\stackrel{u}{\longrightarrow} \tau_{\geq -d+1}R \stackrel{}{\longrightarrow} \tau_{\leq -d}R[1],\label{diyifj}\\
&\tau_{\leq -d}Q \stackrel{}{\longrightarrow} Q\stackrel{v}{\longrightarrow} \tau_{\geq -d+1}Q \stackrel{}{\longrightarrow} \tau_{\leq -d}Q[1]\label{dierfj}.
\end{flalign}
Applying $\operatorname{Hom}(R,-)$ to the triangle \eqref{dierfj},
we obtain an exact sequence
\[
\operatorname{Hom}(R,Q) \stackrel{v_\ast}{\longrightarrow} \operatorname{Hom}(R,\tau_{\geq -d+1}Q) \stackrel{}{\longrightarrow} \operatorname{Hom}(R,\tau_{\leq -d}Q[1]).
\]
Since $\mathcal{D}^{\leq -d-1}=\mathcal{P}^{\perp_{\geq -d}}\subseteq(\mathcal{P}^{[0,d]})^{\perp_{0}}$, we have $\operatorname{Hom}(R,\tau_{\leq -d}Q[1]) = 0$.
Thus, the map $v_\ast$
is surjective. Note that we have the isomorphism $$\operatorname{Hom}(\tau_{\geq -d+1}R,\tau_{\geq -d+1}Q) \stackrel{u^\ast}{\longrightarrow} \operatorname{Hom}(R,\tau_{\geq -d+1}Q)$$ by the adjunction $\tau_{\geq-d+1}\dashv \sigma_{\geq-d+1} $. It is easy to see that $(u^\ast)^{-1}(v_\ast(f))=\tau_{\geq -d+1}(f)$ for any $f\in\operatorname{Hom}(R,Q)$.
Hence, we conclude that $\tau_{\geq -d+1}$ is full.

Since $\mathcal{P}[d] \subseteq \mathcal{P}^{\perp_{>-d}} = \mathcal{D}^{\leq -d}$, we have $\tau_{\geq -d+1}(f)=0$
for any $f\in\operatorname{Hom}(R,Q)$ which factors through $\mathcal{P}{[d]}$.
Conversely, let $f\in\Hom(R,Q)$ such that $\tau_{\geq -d+1}(f) = 0$.
By the definition of $\mathcal{P}^{[0,d]}$, there is a triangle
\begin{equation}\label{fjzhl}
R^{[0,d-1]} \stackrel{\iota}{\longrightarrow} R \stackrel{\pi}{\longrightarrow} R^{d}[d]\stackrel{}{\longrightarrow} R^{[0,d-1]}[1]
\end{equation}
such that $R^d \in \mathcal{P}$ and $R^{[0,d-1]} \in \mathcal{P}^{[0,d-1]}$.
Applying the functor $\operatorname{Hom}(-,Q)$ to the triangle \eqref{fjzhl}, we get an exact sequence
\[
\operatorname{Hom}(R^{d}[d],Q)\stackrel{\pi^\ast}{\longrightarrow} \operatorname{Hom}(R,Q)\stackrel{\iota^\ast}{\longrightarrow}\operatorname{Hom}(R^{[0,d-1]},Q).
\]
Considering the $t$-decompositions of $R^{[0,d-1]}$ and $Q$ with respect to $(\mathcal{D}^{\leq-d},\mathcal{D}^{\geq-d+1})$, we obtain a commutative diagram of triangles
\[
\xymatrix{
\tau_{\leq-d}R^{[0,d-1]} \ar[r]\ar[d] & R^{[0,d-1]} \ar[d]^{f\circ \iota}\ar[r] & \tau_{\geq-d+1}R^{[0,d-1]}\ar[d]^{\tau_{\geq -d+1}(f\circ \iota)}\ar[r]&\tau_{\leq-d}R^{[0,d-1]}[1]\ar[d] \\
\tau_{\leq-d}Q\ar[r]&Q  \ar[r]^-{p}&\tau_{\geq -d+1}Q\ar[r]&\tau_{\leq-d}Q[1].
}
\]
Applying $\operatorname{Hom}(R^{[0,d-1]}, -) $ to the lower triangle,
we have an exact sequence
\[
\operatorname{Hom}(R^{[0,d-1]}, \tau_{\le -d} Q)\stackrel{}{\longrightarrow} \operatorname{Hom}(R^{[0,d-1]}, Q) \stackrel{p_\ast}{\longrightarrow} \operatorname{Hom}(R^{[0,d-1]}, \tau_{\ge -d+1} Q).
\]
Since
$\mathcal{D}^{\le -d}\subseteq (\mathcal{P}^{[0,d-1]})^{ \perp_{0}},$ we have $\operatorname{Hom}(R^{[0,d-1]}, \tau_{\le -d} Q)=0$.
{Thus, $p_\ast$ is injective.}
Noting that $\tau_{\geq -d+1}(f\circ \iota)=\tau_{\geq -d+1}(f)\circ \tau_{\geq -d+1}(\iota)=0,$
we get
$p_\ast(f\circ \iota)=0$.
It follows that $f\circ\iota=0$, {and then} $f$ factors through $\pi$. Thus, $f$ factors through $\mathcal{P}[d]$.
Hence, the functor $\tau_{\geq -d+1}:\mathcal{P}^{[0,d]}\rightarrow\mathcal{H}_{\mathcal{P}}^d$ induces the fully faithful functor \eqref{tauhz},
which is dense by Lemma \ref{Ppres}. Therefore, we complete the proof.
\end{proof}

As an immediate consequence of the above proposition, we have the following corollary,
which is a generalization of \cite[Lemma 3.8]{G24}.

\begin{corollary}\label{covfequ}
Assume that $\mathcal{P}$ is contravariantly finite in $\mathcal{D}$,
and let $\mathcal{V}$ be a subcategory of $\mathcal{P}^{[0,d]}$.
If $\mathcal{V}$ is covariantly finite in $\mathcal{P}^{[0,d]}$,
then $\tau_{\geq -d+1}\mathcal{V}$ is covariantly finite in $\mathcal{H}_{\mathcal{P}}^d$.
In addition, if $\mathcal{P}{[d]} \subseteq \mathcal{V}$, then the converse holds.

\end{corollary}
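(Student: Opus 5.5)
The plan is to transport approximations through the equivalence of Proposition \ref{PequH}. Write $F:=\tau_{\geq -d+1}$ and let $\pi:\mathcal{P}^{[0,d]}\to \mathcal{P}^{[0,d]}/\mathcal{P}[d]$ be the quotient functor, so that $F$ factors as $\bar F\circ\pi$ with $\bar F$ an equivalence.

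\emph{Forward direction.} Assume $\mathcal{V}$ is covariantly finite in $\mathcal{P}^{[0,d]}$, and let $M\in\mathcal{H}_{\mathcal{P}}^d$.
\begin{enumerate}
\item By Lemma \ref{Ppres}, choose $X\in\mathcal{P}^{[0,d]}$ with $FX\cong M$.
\item Take a left $\mathcal{V}$-approximation $g:X\to V$ in $\mathcal{P}^{[0,d]}$.
\item Claim: $F(g):FX\to FV$ is a left $\tau_{\geq -d+1}\mathcal{V}$-approximation of $M$. Objects of $\operatorname{add}(\tau_{\geq -d+1}\mathcal{V})$ are summands of some $FV'$ with $V'\in\mathcal{V}$, since $F$ is additive and $\mathcal{V}=\operatorname{add}\mathcal{V}$. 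So it suffices to test morphisms $h:FX\to FV'$.
\item By fullness of $F$ (Proposition \ref{PequH}), $h=F(h')$ for some $h':X\to V'$.
\item Since $g$ is an approximation, $h'=k\circ g$, hence $h=F(k)\circ F(g)$.
\end{enumerate}
If one insists that $\tau_{\geq -d+1}\mathcal{V}$ itself be add-closed, replace it by its additive closure; the argument is unaffected.

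\emph{Converse.} Assume $\mathcal{P}[d]\subseteq\mathcal{V}$ and that $\tau_{\geq -d+1}\mathcal{V}$ is covariantly finite in $\mathcal{H}^d_{\mathcal{P}}$. Let $X\in\mathcal{P}^{[0,d]}$.
\begin{enumerate}
\item Take a left approximation $a:FX\to N$ with $N\in\operatorname{add}(\tau_{\geq-d+1}\mathcal{V})$. Enlarging by a complement, we may assume $N= FV$ with $V\in\mathcal{V}$: if $N\oplus N'\cong FV$, compose $a$ with the inclusion into $FV$.
\item By fullness, write $a=F(g)$ for some $g:X\to V$.
\item Let $f:X\to V'$ with $V'\in\mathcal{V}$. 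Then $F(f)=b\circ F(g)$ for some $b$, and $b=F(k)$ by fullness. Hence $F(f-k g)=0$.
\item By the faithfulness part of Proposition \ref{PequH}, $f-kg$ factors through an object of $\mathcal{P}[d]$. Concretely, the proof there shows it factors through $\pi_X:X\to X^d[d]$ from the triangle \eqref{fjzhl}.
\item So $f-kg=c\circ\pi_X$ for some $c:X^d[d]\to V'$.
\item Define $g':=\binom{g}{\pi_X}:X\to V\oplus X^d[d]$. The target lies in $\mathcal{V}$ because $\mathcal{P}[d]\subseteq\mathcal{V}$ and $\mathcal{V}$ is additive.
\item Then $f=(k,c)\circ g'$, so $g'$ is a left $\mathcal{V}$-approximation.
\end{enumerate}

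The main obstacle is step 4 of the converse: the kernel of $F$ must be absorbed by a single map from $X$ into $\mathcal{P}[d]$, uniform in $f$. I will use the explicit map $\pi_X$ from the proof of Proposition \ref{PequH}. That proof shows every morphism killed by $F$ factors through $\pi_X$, a morphism determined by $X$ alone. This fixed map is what makes the enlarged map $g'$ an approximation.
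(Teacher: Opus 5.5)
Your proposal is correct and follows essentially the same route as the paper: the forward direction uses fullness and density of $\tau_{\geq -d+1}$, and the converse enlarges a lifted approximation $g$ to $\binom{g}{\pi_X}\colon X\to V\oplus X^d[d]$. The only difference is cosmetic. The paper first factors $f-kg$ through some $Q^d[d]\in\mathcal{P}[d]$ and then reroutes it through $\pi_X$ using $\operatorname{Hom}(X^{[0,d-1]},Q^d[d])=0$, whereas you read the factorization through $\pi_X$ directly off the proof of Proposition \ref{PequH}; your extra handling of additive closures is harmless.
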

\begin{proof}
The first statement follows directly from the fact that the functor $\tau_{\geq -d+1}$ is full and dense.
Now suppose $\tau_{\geq -d+1}\mathcal{V}$ is covariantly finite in $\mathcal{H}_{\mathcal{P}}^d$.
For any $R \in \mathcal{P}^{[0,d]}$, let
\[
\tau_{\geq -d+1}f: \tau_{\geq -d+1}R \stackrel{}{\longrightarrow} \tau_{\geq -d+1}V
\]
be a left $\tau_{\geq -d+1}\mathcal{V}$-approximation with $V\in\mathcal{V}$.

By the structure of $\mathcal{P}^{[0,d]}$, there is a triangle
\[
R^{[0,d-1]} \stackrel{\iota}{\longrightarrow} R \stackrel{\pi}{\longrightarrow} R^d[d]\stackrel{}{\longrightarrow} R^{[0,d-1]}[1]
\]
such that $R^d \in \mathcal{P}$ and $R^{[0,d-1]} \in \mathcal{P}^{[0,d-1]}$. Since $\mathcal{P}{[d]} \subseteq \mathcal{V}$, we have $R^d[d]\in\mathcal{V}$.

For any  $f': R \to V'$ with $V' \in \mathcal{V}$, since $\tau_{\geq -d+1}f$ is a left $\tau_{\geq -d+1}\mathcal{V}$-approximation,
there exists $\alpha: V \to V'$ such that
$\tau_{\geq -d+1}(\alpha) \circ \tau_{\geq -d+1}(f) = \tau_{\geq -d+1}(f').$
By Proposition \ref{PequH}, there exist morphisms $g: R\to Q^d[d]$ and $h:Q^d[d]\to V'$ with $Q^d[d] \in \mathcal{P}[d]$ such that $f'-\alpha f=h\circ g$.
Since $\operatorname{Hom}(R^{[0,d-1]}, Q^d[d]) = 0$ and then $g\circ \iota=0$,
there exists $k:R^d[d]\rightarrow Q^d[d]$ such that $g=k\circ\pi$, i.e. we have
the following commutative diagram
\[
\xymatrix{
&R \ar[rr]^-{f' - \alpha f} \ar[dr]_-{g} \ar[dl]_-{\pi} & & V' \\
R^d[d] \ar[rr]_-{k} && Q^d[d]. \ar[ur]_-{h}
}
\]
Thus, we get the following commutative diagram
\[
\xymatrix{
R \ar[rr]^-{\left(\begin{smallmatrix} f \\ \pi \end{smallmatrix}\right)} \ar[dr]_-{f'} & & V \oplus R^d[d] \ar[dl]^-{(\alpha \ hk)} \\
& V'
}
\]
Hence, ${\left(\begin{smallmatrix} f \\ \pi \end{smallmatrix}\right)}: R \rightarrow V \oplus R^d[d]$
is a left $\mathcal{V}$-approximation of $R$.
Therefore, $\mathcal{V} $ is covariantly finite in $\mathcal{P}^{[0,d]}$.
\end{proof}

Let $(d+1)$-${\rm conf. silt}\,\mathcal{P}$ be the poset consisting of the silting subcategories of $\mathcal{K}(\mathcal{P})$ which are contained in $\mathcal{P}^{[0,d]}$ and contravariantly finite in $\mathcal{D}$.
Let ${\rm f.}s\operatorname{-tors}\mathcal{H}_{\mathcal{P}}^{d}$ denote the poset of functorially finite $s$-torsion classes in $\mathcal{H}_{\mathcal{P}}^{d}$.

\begin{proposition}\label{monofs}
If $\mathcal{P}$ is contravariantly finite in $\mathcal{D}$, there is a monomorphism of posets
\begin{align*}
  \phi: (d+1)\operatorname{-conf.silt}\mathcal{P} \longrightarrow {\rm f.}s\operatorname{-tors}\mathcal{H}_{\mathcal{P}}^{d},~
  \mathcal{S}  \longmapsto \mathcal{S}^{\perp_{>0}} \cap \mathcal{H}_{\mathcal{P}}^d.
\end{align*}
\end{proposition}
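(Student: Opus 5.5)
Fix $\mathcal{S}\in(d+1)\operatorname{-conf.silt}\mathcal{P}$ and set $\mathcal{T}:=\mathcal{S}^{\perp_{>0}}\cap\mathcal{H}_{\mathcal{P}}^d$ and $\mathcal{F}:=\mathcal{S}^{\perp_{\leq 0}}\cap\mathcal{H}_{\mathcal{P}}^d$. We must show four things: $(\mathcal{T},\mathcal{F})$ is an $s$-torsion pair in $\mathcal{H}_{\mathcal{P}}^d$, $\mathcal{T}$ is functorially finite, $\phi$ is order preserving, and $\phi$ is injective and order reflecting. The guiding idea is that a contravariantly finite silting subcategory of $\mathcal{K}(\mathcal{P})$ lying in $\mathcal{P}^{[0,d]}$ induces a $t$-structure $(\mathcal{S}^{\perp_{>0}},\mathcal{S}^{\perp_{\leq 0}})$ in $\mathcal{D}$ which is sandwiched between $t$ and $t[d]$. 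We then intersect it with $\mathcal{H}_{\mathcal{P}}^d=\mathcal{D}^{\leq 0}\cap\mathcal{D}^{\geq -d+1}$.

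\textbf{Step 1: the $t$-structure.} Since $\mathcal{S}$ is contravariantly finite in $\mathcal{D}$ and $\mathcal{S}\subseteq\mathcal{P}^{[0,d]}$, I would build, for any $X\in\mathcal{D}$, the $t$-decomposition by iterated right $\mathcal{S}[i]$-approximations. This follows the same mechanism that makes $(\mathcal{P}^{\perp_{>0}},\mathcal{P}^{\perp_{\leq 0}})$ a $t$-structure, and it is exactly what is used in \cite{WZ25}. I expect this to be the main obstacle: the approximation tower must stabilize, and this should use $\mathcal{P}\subseteq\mathcal{S}^{[-d,0]}$ (Lemma \ref{siltiff} together with \cite[Lemma 3.7]{AMY19}) to reduce to the known $t$-structure $t$.

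\textbf{Step 2: the $s$-torsion pair in the heart.} Lemma \ref{siltiff}(3) gives $\mathcal{D}^{\leq -d}=\mathcal{P}^{\perp_{>0}}[d]\subseteq\mathcal{S}^{\perp_{>0}}\subseteq\mathcal{D}^{\leq 0}$ on $\mathcal{K}(\mathcal{P})$. I would extend this to $\mathcal{D}$ by the same shift argument, since both sides are $\perp$-classes. For $M\in\mathcal{H}_{\mathcal{P}}^d$, take the decomposition $T\to M\to F\to T[1]$ with respect to $(\mathcal{S}^{\perp_{>0}},\mathcal{S}^{\perp_{\leq 0}})$. We have $T\in\mathcal{D}^{\leq 0}$, and $F\in\mathcal{S}^{\perp_{\le0}}\subseteq \mathcal{D}^{\geq -d+1}$. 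Closure of $\mathcal{D}^{\le0}$ under cones gives $F\in\mathcal{D}^{\le 0}$, and closure of $\mathcal{D}^{\ge -d+1}$ under cocones gives $T\in\mathcal{D}^{\geq -d+1}$. Hence both $T$ and $F$ lie in $\mathcal{H}_{\mathcal{P}}^d$. Moreover $\operatorname{Hom}(\mathcal{T},\mathcal{F}[\leq 0])=0$ because $\mathcal{S}^{\perp_{>0}}$ is closed under $[1]$, so the pair is an $s$-torsion pair.

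\textbf{Step 3: functorial finiteness.} The pair already gives contravariant finiteness of $\mathcal{T}$. For covariant finiteness, I would show $\mathcal{T}=\operatorname{add}(\tau_{\geq -d+1}\mathcal{V})$ with $\mathcal{V}=\psi(\mathcal{S})=\mathcal{S}^{[0,+\infty)}\cap\mathcal{P}^{[0,d]}$. Here $\mathcal{V}$ is covariantly finite in $\mathcal{P}^{[0,d]}$, being a cotorsion class by Proposition \ref{dsisohc}. The inclusion $\supseteq$ holds since $\mathcal{S}^{\perp_{>0}}$ is closed under the cone in the triangle $\tau_{\le -d}V\to V\to\tau_{\ge -d+1}V$, with $\tau_{\le -d}V[1]\in\mathcal{D}^{\le -d-1}\subseteq\mathcal{S}^{\perp_{>0}}$. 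For $\subseteq$, take $M\in\mathcal{T}$ with a $\mathcal{P}$-presentation $R$ (Lemma \ref{Ppres}), and replace $R$ by a suitable object of $\mathcal{V}$ using the cotorsion triangle $R\to V'\to U'$. Corollary \ref{covfequ} then concludes.

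\textbf{Step 4: order and injectivity.} If $\mathcal{S}_1\le\mathcal{S}_2$ then $\mathcal{S}_1\subseteq\mathcal{S}_2^{\perp_{>0}}$, so $\mathcal{S}_2^{\perp_{>0}}\subseteq\mathcal{S}_1^{\perp_{>0}}$ as $t$-aisles; hence $\phi$ reverses or preserves order according to the convention chosen, and the convention should be fixed consistently with $\psi$. Conversely, suppose $\phi(\mathcal{S}_1)\subseteq\phi(\mathcal{S}_2)$. Since each aisle $\mathcal{S}^{\perp_{>0}}$ is determined by its intersection with $\mathcal{H}_{\mathcal{P}}^d$ together with $\mathcal{D}^{\le -d}$, namely as $\mathcal{D}^{\le -d}\ast\mathcal{T}$, the aisles compare in the same way. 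This comparison recovers the order on $\mathcal{S}$ via Lemma \ref{siltiff}, which gives both order reflection and injectivity.
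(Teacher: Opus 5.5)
Your Step 3 is essentially the paper's own argument: the paper also proves $\phi(\mathcal{S})=\tau_{\ge -d+1}\psi(\mathcal{S})$ and then applies Corollary \ref{covfequ}. Two other parts of the proposal do not hold up.

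\textbf{Step 4, order preservation.} The forward half of Step 4 has the inclusion backwards, and appealing to a choice of convention cannot repair it. Both orders are fixed: \eqref{siltingorder} on silting subcategories and inclusion on $s$-torsion classes. The statement says $\phi$ preserves them. From $\mathcal{S}_1\le\mathcal{S}_2$ one gets $\mathcal{S}_1^{\perp_{>0}}\subseteq\mathcal{S}_2^{\perp_{>0}}$, not the reverse. For example, $\mathcal{P}[d]\le\mathcal{P}$, while $(\mathcal{P}[d])^{\perp_{>0}}=\mathcal{D}^{\le -d}\subseteq\mathcal{D}^{\le 0}=\mathcal{P}^{\perp_{>0}}$; correspondingly $\phi(\mathcal{P}[d])=0\subseteq\mathcal{H}^d_{\mathcal{P}}=\phi(\mathcal{P})$. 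The correct inclusion follows at once from Lemma \ref{siltiff}. Both $\mathcal{S}_i$ lie in $\mathcal{P}^{[0,d]}$, so $\operatorname{Hom}(\mathcal{S}_1,\mathcal{S}_2[>d])=0$, i.e. $\mathcal{S}_2\le\mathcal{S}_1[-d]$. Hence $\mathcal{S}_1\le\mathcal{S}_2\le\mathcal{S}_1[-d]$, and (2)$\Rightarrow$(3) gives $\mathcal{S}_1^{\perp_{>0}}\subseteq\mathcal{S}_2^{\perp_{>0}}$. Your reverse argument is correct: $\mathcal{T}_1\subseteq\mathcal{T}_2$ gives $\mathcal{D}^{\le -d}\ast\mathcal{T}_1\subseteq\mathcal{D}^{\le -d}\ast\mathcal{T}_2$, hence $\mathcal{S}_1\subseteq\mathcal{S}_1^{\perp_{>0}}\subseteq\mathcal{S}_2^{\perp_{>0}}$. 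It needs only $\mathcal{D}^{\le -d}\subseteq\mathcal{S}^{\perp_{>0}}\subseteq\mathcal{D}^{\le 0}$ and extension-closedness, not Steps 1--2. But combined with the forward inclusion you wrote, it would make $\le$ symmetric, so Step 4 as written is self-contradictory. Once the direction is fixed, $\phi$ is an order embedding and injectivity follows.

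\textbf{Step 1, existence of the decomposition.} Step 1 is only announced, and the mechanism you describe does not obviously terminate. After $n$ steps of the tower of right $\mathcal{S}[i]$-approximations you only know $\operatorname{Hom}(\mathcal{S}[j],X_n)=0$ for $0\le j<n$. Each new cone $S_{n-1}[n]$ can again receive nonzero maps from $\mathcal{S}[n]$, since $\operatorname{Hom}(\mathcal{S}[n],S_{n-1}[n])=\operatorname{Hom}(\mathcal{S},S_{n-1})$. Because $\operatorname{Hom}(\mathcal{S},\mathcal{S}[<0])$ need not vanish, it can receive maps from higher shifts as well. You also cannot cut the tower off by a $t$-truncation. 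Truncating creates terms $\operatorname{Hom}(\mathcal{S}[j],\mathcal{D}^{\le -d-1})$, which can be nonzero for $j\ge 1$ (e.g. $\mathcal{S}=\mathcal{P}[d]$, $j=1$). Moreover, in this generality $\mathcal{P}$ need not be bounded below with respect to $t$. Your appeal to $\mathcal{P}\subseteq\mathcal{S}^{[-d,0]}$ does not explain how to get around either issue.

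The paper does not try to show that $(\mathcal{S}^{\perp_{>0}},\mathcal{S}^{\perp_{\le 0}})$ is a $t$-structure on $\mathcal{D}$. It takes two facts directly from \cite[Proposition 2.5]{WZ25} and its proof: that the pair is an $s$-torsion pair in $\mathcal{H}^d_{\mathcal{P}}$, and that $\phi$ is a poset monomorphism. Its own contribution is only the covariant finiteness. You should either cite that result or give an actual proof of Step 1.

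\textbf{A simplification in Step 3.} No replacement of the presentation $R$ is needed. For $S\in\mathcal{S}$ and $i>0$, the triangle $\tau_{\le -d}R\to R\to M\to\tau_{\le -d}R[1]$ gives $\operatorname{Hom}(S,R[i])\cong\operatorname{Hom}(S,M[i])=0$, because $\mathcal{D}^{\le -d}\subseteq\mathcal{S}^{\perp_{>0}}$. So every $\mathcal{P}$-presentation of $M\in\mathcal{T}$ already lies in $\psi(\mathcal{S})$.
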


\begin{proof}
According to \cite[Proposition 2.5]{WZ25} and its proof, we have
\[
\phi: (d+1)\operatorname{-conf.silt}\mathcal{P} \longrightarrow s\operatorname{-tors}\mathcal{H}_{\mathcal{P}}^{d}
\]
is a monomorphism of posets.
It suffices to show that $\phi (\mathcal{S})$ is covariantly finite in $\mathcal{H}_\mathcal{P}^d$.

By Proposition \ref{dsisohc},
$\psi (\mathcal{S})= \mathcal{S}^{\perp_{>0}} \cap \mathcal{P}^{[0,d]}$
is a hereditary cotorsion class in $\mathcal{P}^{[0,d]}$,
and thus covariantly finite in $\mathcal{P}^{[0,d]}$.

For any {$i>0$,} $S\in\mathcal{S}$ and $R \in \psi(\mathcal{S})$,
applying $\operatorname{Hom}(S, -)$ to the triangle
\[
\tau_{\le-d}R \stackrel{}{\longrightarrow} R\stackrel{}{\longrightarrow} \tau_{\ge-d+1}R \stackrel{}{\longrightarrow}  \tau_{\le-d}R[1],
\]
we obtain the following exact sequence
\[
0=\operatorname{Hom}(S,\tau_{\leq -d}R[i])\to \operatorname{Hom}(S, R[i]) \to\operatorname{Hom}(S, \tau_{\ge-d+1}R[i]) \to\operatorname{Hom}(S,\tau_{\leq -d}R[i+1])=0,
\]
where the first and last terms vanish, since $\mathcal{D}^{\le-d} = \mathcal{P}^{\perp_{>0}}[d] \subseteq \mathcal{S}^{\perp_{>0}}$.
Thus, we have $$\operatorname{Hom}(S, \tau_{\ge-d+1}R[i]) \cong \operatorname{Hom}(S, R[i])=0.$$
It follows that $\tau_{\ge-d+1}\psi (\mathcal{S})\subseteq \phi (\mathcal{S})$.
Using the same arguments and Lemma \ref{Ppres}, we obtain $\phi (\mathcal{S})\subseteq \tau_{\ge-d+1}\psi (\mathcal{S})$.
Hence, $\tau_{\ge-d+1}\psi (\mathcal{S}) = \phi (\mathcal{S})$.
By Corollary \ref{covfequ}, $\phi (\mathcal{S})$ is covariantly finite in $\mathcal{H}_{\mathcal{P}}^d$.
Therefore, we complete the proof.
\end{proof}

Let us recall the Wakamatsu lemma in extriangulated categories, which will be used in the proof of the main theorem.

\begin{lemma}\cite[Lemma 3.1]{LZ20}\label{Wakamatsu}
  Let $\mathcal{B}$ be an extension closed subcategory of a triangulated category $\mathcal{C}$.
  Let
  \[X\stackrel{b}{\longrightarrow} B \stackrel{}{\longrightarrow} C \stackrel{}{\longrightarrow} X[1]\] be a triangle
  such that the first three terms lie in $\mathcal{B}$ and
  $b$ is a minimal left $\mathcal{B}$-approximation of $X$, then $C \in {^{\perp_1}}\mathcal{B}$.
\end{lemma}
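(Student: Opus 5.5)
The plan is to show directly that every morphism $h\colon C\to B'[1]$ with $B'\in\mathcal{B}$ vanishes. We use only two ingredients: that $\mathcal{B}$ is closed under extensions, and the minimality of $b$. Write the given triangle as
\[X\stackrel{b}{\longrightarrow} B\stackrel{g}{\longrightarrow} C\stackrel{w}{\longrightarrow} X[1].\]
Recall that $g\circ b=0$ and $b[1]\circ w=0$, since consecutive morphisms in a triangle compose to zero.

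\emph{Step 1: $h\circ g=0$.} Complete $h\circ g\colon B\to B'[1]$ to a triangle
\[B'\longrightarrow E\stackrel{p}{\longrightarrow} B\stackrel{h g}{\longrightarrow} B'[1].\]
Since $B',B\in\mathcal{B}$ and $\mathcal{B}$ is extension-closed, we get $E\in\mathcal{B}$. Because $(hg)\circ b=h\circ(gb)=0$, the morphism $b$ lifts through $p$: there is $b'\colon X\to E$ with $p b'=b$. As $E\in\mathcal{B}$ and $b$ is a left $\mathcal{B}$-approximation, $b'$ factors as $b'=s\circ b$ for some $s\colon B\to E$. Then $(p s)\circ b=p b'=b$. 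Left minimality of $b$ now forces $ps$ to be an automorphism of $B$. Consequently
\[hg=(hg)(ps)(ps)^{-1}=(hg\,p)\,s\,(ps)^{-1}=0,\]
because $hg\circ p=0$ in the triangle above.

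\emph{Step 2: $h=0$.} Since $h\circ g=0$, applying $\operatorname{Hom}(-,B'[1])$ to the given triangle shows that $h$ factors through $w$. So $h=k[1]\circ w$ for some $k\colon X\to B'$. Again $B'\in\mathcal{B}$ and $b$ is a left $\mathcal{B}$-approximation, so $k=l\circ b$ for some $l\colon B\to B'$. Therefore
\[h=l[1]\circ b[1]\circ w=0.\]
This gives $C\in{^{\perp_1}}\mathcal{B}$.

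The only delicate point is Step 1, where minimality is genuinely needed. The key is to apply the approximation property of $b$ to the lifted morphism $b'$, which is possible because $E\in\mathcal{B}$ by extension-closure. This produces an endomorphism $ps$ of $B$ with $(ps)\circ b=b$, which left minimality turns into an isomorphism. Everything else is routine use of the long exact $\operatorname{Hom}$-sequences.
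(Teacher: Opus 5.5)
Your proof is correct. It is the standard Wakamatsu argument behind the cited Lemma 3.1 of [LZ20], which the paper quotes without proof: the pulled-back extension $E$ of $B$ by $B'$ lies in $\mathcal{B}$ and splits by left minimality of $b$, giving $h\circ g=0$, and then $h=0$ because every map $X\to B'$ factors through $b$ while $b[1]\circ w=0$. You only use $B\in\mathcal{B}$, not $X,C\in\mathcal{B}$, and that is exactly the generality needed when the lemma is applied in Step 2 of Theorem \ref{sfbij}, where $R$ and $C(v)$ lie in $\mathcal{P}^{[0,d]}$ but need not lie in $\varphi(\mathcal{T})$.
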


For any $f\in\operatorname{Hom}(X,Y)$, denote by $C(f)$ the cone of $f$.
The following technical lemma is a generalization of \cite[Lemma 3.7]{G24}.

\begin{lemma}\label{tauCtau}
For any morphism $f: R \to Q$ with $R,\,Q\in \mathcal{P}^{[0,d]}$, we have
\[
\tau_{\ge-d+1}C(f) \cong \tau_{\ge-d+1}C(\tau_{\ge-d+1}f).
\]
\end{lemma}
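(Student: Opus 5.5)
Write $u\colon R\to \tau_{\ge-d+1}R$ and $v\colon Q\to \tau_{\ge-d+1}Q$ for the truncation morphisms, and $\bar f:=\tau_{\ge-d+1}f$, so that $v f=\bar f u$. The idea is to compare the two cones through the intermediate morphism $\bar f u = v f\colon R\to \tau_{\ge-d+1}Q$. We apply the octahedral axiom twice and then use that the difference terms are killed by $\tau_{\ge-d+1}$.

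\emph{Step 1: $C(f)$ versus $C(vf)$.} Apply the octahedral axiom to the composition $R\xrightarrow{f}Q\xrightarrow{v}\tau_{\ge-d+1}Q$. This gives a triangle
\[
C(f)\longrightarrow C(vf)\longrightarrow C(v)\longrightarrow C(f)[1],
\]
with $C(v)\cong \tau_{\le-d}Q[1]\in\mathcal{D}^{\le -d-1}$.

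\emph{Step 2: $C(\bar f)$ versus $C(\bar f u)$.} Apply the octahedral axiom to $R\xrightarrow{u}\tau_{\ge-d+1}R\xrightarrow{\bar f}\tau_{\ge-d+1}Q$. This gives a triangle
\[
C(u)\longrightarrow C(\bar f u)\longrightarrow C(\bar f)\longrightarrow C(u)[1],
\]
with $C(u)\cong\tau_{\le-d}R[1]\in\mathcal{D}^{\le-d-1}$, and hence also $C(u)[1]\in\mathcal{D}^{\le -d-1}$. Since $vf=\bar f u$, the two middle terms coincide: $C(vf)=C(\bar f u)$.

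\emph{Step 3: a general lemma.} We need: if $A\to B\to C\to A[1]$ is a triangle with $C\in\mathcal{D}^{\le-d}$ (respectively $A\in\mathcal{D}^{\le -d}$), then $\tau_{\ge-d+1}A\cong\tau_{\ge-d+1}B$ (respectively $\tau_{\ge-d+1}B\cong\tau_{\ge-d+1}C$). To prove this, apply $\operatorname{Hom}(-,Y)$ for $Y\in\mathcal{D}^{\ge-d+1}$. The relevant terms $\operatorname{Hom}(C,Y)$, $\operatorname{Hom}(C[-1],Y)$ (respectively $\operatorname{Hom}(A,Y)$, $\operatorname{Hom}(A[1],Y)$) vanish, because $\mathcal{D}^{\le -d}$ is closed under $[1]$, and $C[-1]$ or $A$ lies in $\mathcal{D}^{\le -d}$ whenever the original object lies in $\mathcal{D}^{\le-d-1}$. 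Adjunction then yields a natural isomorphism $\operatorname{Hom}(\tau_{\ge-d+1}A,-)\cong\operatorname{Hom}(\tau_{\ge-d+1}B,-)$ on $\mathcal{D}^{\ge -d+1}$, and Yoneda finishes the argument.

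\emph{Combining.} In Step 1 the third term $C(v)\in\mathcal{D}^{\le -d-1}$. For the rotation $C(vf)\to C(v)\to C(f)[1]$ we need both $C(v)$ and $C(v)[-1]=\tau_{\le -d}Q$ to lie in $\mathcal{D}^{\le -d}$; both do. Hence $\tau_{\ge-d+1}C(f)\cong\tau_{\ge-d+1}C(vf)$. In Step 2 the first term satisfies $C(u)\in\mathcal{D}^{\le-d-1}\subseteq\mathcal{D}^{\le -d}$, and $C(u)[1]$ lies there as well. Hence $\tau_{\ge-d+1}C(\bar f u)\cong \tau_{\ge-d+1}C(\bar f)$, and chaining the two isomorphisms proves the lemma.

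The main obstacle is the bookkeeping of degrees in Step 3: we must make sure the required Hom-vanishing uses exactly the shifts $C(v)$, $C(v)[-1]$, $C(u)$, $C(u)[1]$, all of which sit in $\mathcal{D}^{\le -d}$. We also note that the argument never uses the hypothesis $R,Q\in\mathcal{P}^{[0,d]}$ beyond the existence of the truncations, so no further input such as Proposition \ref{PequH} should be needed.
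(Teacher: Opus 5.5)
Your proof is correct, but it is organized differently from the paper's. The paper applies the $3\times 3$ lemma to the truncation triangles of $R$ and $Q$ to get a triangle $C(\tau_{\le-d}f)\to C(f)\to C(\tau_{\ge-d+1}f)\to C(\tau_{\le-d}f)[1]$ with $C(\tau_{\le-d}f)\in\mathcal{D}^{\le -d}$. It then applies one octahedron to the composite $C(f)\to C(\tau_{\ge-d+1}f)\to\tau_{\ge-d+1}C(\tau_{\ge-d+1}f)$. This produces a triangle $W\to C(f)\to \tau_{\ge-d+1}C(\tau_{\ge-d+1}f)\to W[1]$ with $W\in\mathcal{D}^{\le-d}$, and the paper concludes by uniqueness of the $t$-decomposition of $C(f)$.

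You instead use the single square $vf=\bar f u$ and two octahedra, one per factorization, with no $3\times3$ lemma. You then put all the degree bookkeeping into an adjunction/Yoneda criterion for when $\tau_{\ge-d+1}$ inverts a morphism. The paper's route gives slightly more: the whole $t$-decomposition of $C(f)$, with $\tau_{\le-d}C(f)\cong W$ an extension of $C(\tau_{\le-d}f)$ and $\tau_{\le-d}C(\tau_{\ge-d+1}f)$. Yours avoids the non-canonical fill-ins and sign conventions of the $3\times3$ lemma and isolates a reusable principle. It also exhibits the isomorphism as $\tau_{\ge-d+1}$ applied to the explicit composite $C(f)\to C(vf)=C(\bar f u)\to C(\bar f)$. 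As you observe, neither argument uses $R,Q\in\mathcal{P}^{[0,d]}$.

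One slip to fix: in Step 3 the first case must assume $C\in\mathcal{D}^{\le-d-1}$, so that $C[-1]\in\mathcal{D}^{\le -d}$, not merely $C\in\mathcal{D}^{\le-d}$. With the weaker hypothesis the claim fails, e.g. for $B=0$, $C=A[1]$ and $0\neq A\in\mathcal{D}^{\le-d+1}\cap\mathcal{D}^{\ge-d+1}$. Your proof of the criterion and its application to $C(v)\cong\tau_{\le-d}Q[1]$ already use the correct condition, so only the wording of the statement needs changing.
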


\begin{proof}
By the $3\times3$ lemma for triangulated categories, we have the following diagram in which all rows and columns are triangles
\[
\xymatrix{
\tau_{\le-d}R \ar[r] \ar[d]_-{\tau_{\le-d}f} & R \ar[r] \ar[d]^f & \tau_{\ge-d+1}R \ar[d]^{\tau_{\ge-d+1}f}\ar[r]& \tau_{\le-d}R[1]\ar[d]\\
\tau_{\le-d}Q \ar[r] \ar[d] & Q \ar[r] \ar[d] & \tau_{\ge-d+1}Q \ar[d]\ar[r]&\tau_{\le-d}Q[1]\ar[d] \\
C(\tau_{\le-d}f)\ar[d] \ar[r] & C(f) \ar[d]\ar[r] & C(\tau_{\ge-d+1}f)\ar[r]\ar[d]&C(\tau_{\le-d}f)[1]\ar[d]\\
\tau_{\le-d}R[1] \ar[r] & R[1] \ar[r] & \tau_{\ge-d+1}R[1] \ar[r]& \tau_{\le-d}R[2].
}
\]
By the first column, we get $C(\tau_{\le-d}f)\in\mathcal{D}^{\leq-d}$.
Applying the octahedral axiom to the triangle on the third row of the above diagram
and the triangle
\[\tau_{\le-d}C(\tau_{\ge-d+1}f)\to C(\tau_{\ge-d+1}f)\to \tau_{\ge-d+1}C(\tau_{\ge-d+1}f)\to\tau_{\le-d}C(\tau_{\ge-d+1}f)[1],\]
we obtain a commutative diagram of triangles
\[
\xymatrix{
C(f) \ar[r] \ar@{=}[d] & C(\tau_{\ge-d+1}f) \ar[r] \ar[d] & C(\tau_{\le-d}f)[1] \ar[d]\ar[r]&C(f)[1]\ar@{=}[d] \\
C(f) \ar[r] & \tau_{\ge-d+1}C(\tau_{\ge-d+1}f) \ar[r] \ar[d] & W[1] \ar[d]\ar[r]&C(f)[1]\ar[d]\\
& \tau_{\le-d}C(\tau_{\ge-d+1}f)[1]\ar[d] \ar@{=}[r] & \tau_{\le-d}C(\tau_{\ge-d+1}f)[1]\ar[d]\ar[r]&C(\tau_{\ge-d+1}f)[1]\\
&C(\tau_{\ge-d+1}f)[1]\ar[r]&C(\tau_{\leq-d}f)[2].&
}
\]
By the third column of the above diagram, we have $W[1]\in\mathcal{D}^{\leq-d-1}$. Thus, we get
\[
W \stackrel{}{\longrightarrow} C(f) \stackrel{}{\longrightarrow} \tau_{\ge-d+1}C(\tau_{\ge-d+1}f)\stackrel{}{\longrightarrow} W[1]
\]
is the $t$-decomposition of $C(f)$ with respect to $(\mathcal{D}^{\leq -d},\mathcal{D}^{\geq -d+1})$.
By the uniqueness of $t$-decomposition up to isomorphism, we finish the proof.
\end{proof}

We have the following characterization for the heredity of cotorsion pairs, which generalizes the results in \cite[Lemma 3.13]{G24}.

\begin{lemma}\label{hequs}
Let $(\mathcal{U},\mathcal{V})$ be a cotorsion pair in $\mathcal{P}^{[0,d]}$ and $\mathcal{P}[d]\subseteq \mathcal{V}$ .
Then the following are equivalent.
\begin{enumerate}
\item $(\mathcal{U},\mathcal{V})$ is hereditary;
\item $\operatorname{Hom}(\mathcal{U},\mathcal{V}[2])=0$;
\item $\mathcal{V}$ is closed under cones.
\end{enumerate}
\end{lemma}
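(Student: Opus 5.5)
The plan is to prove $(1)\Rightarrow(2)\Rightarrow(3)\Rightarrow(1)$, reading ``$\mathcal{V}$ is closed under cones'' as: for every morphism $f:V_1\to V_2$ with $V_1,V_2\in\mathcal{V}$ whose cone lies in $\mathcal{P}^{[0,d]}$, we have $C(f)\in\mathcal{V}$. One fact will be used repeatedly. Since $\mathcal{P}$ is presilting, $\operatorname{Hom}(\mathcal{P}^{[0,d]},\mathcal{P}^{[0,d]}[>d])=0$, so in particular $\operatorname{Hom}(\mathcal{U},\mathcal{P}[d][\geq 1])=0$.

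\emph{Step $(1)\Rightarrow(2)$.} This is immediate from the definition of a hereditary cotorsion pair.

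\emph{Step $(2)\Rightarrow(3)$.} Take $f:V_1\to V_2$ in $\mathcal{V}$ with $C(f)\in\mathcal{P}^{[0,d]}$, and apply $\operatorname{Hom}(U,-)$, $U\in\mathcal{U}$, to the triangle $V_1\to V_2\to C(f)\to V_1[1]$. This gives the exact sequence
\[
\operatorname{Hom}(U,V_2[1])\to\operatorname{Hom}(U,C(f)[1])\to\operatorname{Hom}(U,V_1[2]).
\]
The left term vanishes since $(\mathcal{U},\mathcal{V})$ is a cotorsion pair, and the right term vanishes by (2). Hence $C(f)\in\mathcal{U}^{\perp_1}\cap\mathcal{P}^{[0,d]}=\mathcal{V}$, by the remark following the definition of cotorsion pairs.

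\emph{Step $(3)\Rightarrow(1)$.} This is the main step; the strategy is dimension shifting using the hypothesis $\mathcal{P}[d]\subseteq\mathcal{V}$. Fix $V\in\mathcal{V}$. By the definition of $\mathcal{P}^{[0,d]}$ there is a triangle
\[
V^{[0,d-1]}\to V\stackrel{\pi}{\to}V^d[d]\to V^{[0,d-1]}[1]
\]
with $V^d\in\mathcal{P}$ and $V^{[0,d-1]}\in\mathcal{P}^{[0,d-1]}$. The map $\pi$ goes between objects of $\mathcal{V}$, since $V^d[d]\in\mathcal{P}[d]\subseteq\mathcal{V}$. Its cone $V':=V^{[0,d-1]}[1]$ lies in $\mathcal{P}^{[1,d]}\subseteq\mathcal{P}^{[0,d]}$, so (3) gives $V'\in\mathcal{V}$. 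Applying $\operatorname{Hom}(U,-)$ to the rotated triangle yields, for $i\geq 2$, the exact sequence
\[
\operatorname{Hom}(U,V'[i-1])\to\operatorname{Hom}(U,V[i])\to\operatorname{Hom}(U,V^d[d+i]).
\]
The right term is zero by the presilting fact recorded above.

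We then argue by induction on $i\geq 1$ that $\operatorname{Hom}(\mathcal{U},\mathcal{V}[i])=0$ for all objects of $\mathcal{V}$ simultaneously. The base case $i=1$ is the cotorsion condition. For the inductive step, the left term $\operatorname{Hom}(U,V'[i-1])$ vanishes by the induction hypothesis applied to $V'\in\mathcal{V}$, hence so does the middle term. This gives $\operatorname{Hom}(\mathcal{U},\mathcal{V}[\geq 2])=0$, i.e. heredity.

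The only delicate point is checking that the cone of $\pi$ really lies in $\mathcal{P}^{[0,d]}$, so that (3) applies. This holds because $\mathcal{P}^{[0,d-1]}[1]=\mathcal{P}^{[1,d]}$. Alternatively, for $i>d$ the vanishing holds for degree reasons alone.
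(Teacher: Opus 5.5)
Your proof is correct and follows essentially the paper's route: in $(3)\Rightarrow(1)$ you use the same triangle $V^{[0,d-1]}\to V\to V^d[d]\to V^{[0,d-1]}[1]$, the hypotheses $\mathcal{P}[d]\subseteq\mathcal{V}$ and closure under cones to get $V^{[0,d-1]}[1]\in\mathcal{V}$, presilting to kill the $\mathcal{P}[d+i]$-terms, and then dimension shifting. The differences are only streamlinings: your induction on the degree, applied to $V'$ uniformly over $\mathcal{V}$, replaces the paper's iterated octahedral construction of the triangles $V^{[i,d-1]}[1]\to V[i+1]\to V^{[d,d+i]}[1]\to V^{[i,d-1]}[2]$, and you prove $(2)\Rightarrow(3)$ directly from $\mathcal{V}=\mathcal{U}^{\perp_1}\cap\mathcal{P}^{[0,d]}$ instead of citing [AT22, Lemma 3.2].
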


\begin{proof}
The implication $(1)\Rightarrow (2)$ is clear, and
$(2)\Rightarrow (3)$ follows from \cite[Lemma 3.2]{AT22}.

For $(3)\Rightarrow (1)$, take $V\in\mathcal{V}$,
consider the triangle
\[
V^{[0,d-1]}\stackrel{}{\longrightarrow} V \stackrel{\pi_V}{\longrightarrow} V_0^d[d] \stackrel{}{\longrightarrow} V^{[0,d-1]}[1]
\]
with $V^{[0,d-1]}\in\mathcal{P}^{[0,d-1]}$ and $V_0^d[d]\in\mathcal{P}[d]\subseteq\mathcal{V}$.
Noting that $V^{[0,d-1]}[1]\in \mathcal{P}^{[0,d]}$, we have
\[
V \stackrel{\pi_V}{\longrightarrow} V_0^d[d] \stackrel{}{\longrightarrow} V^{[0,d-1]}[1]\dashrightarrow
\]
is an extriangle in $\mathcal{P}^{[0,d]}$.
Since $\mathcal{V}$ is closed under cones, we obtain $V^{[0,d-1]}[1]\in \mathcal{V}$.
Thus, we obtain a triangle
\[
V^{[0,d-1]}[1] \stackrel{}{\longrightarrow} V[1] \stackrel{}{\longrightarrow} V_0^d[d+1] \stackrel{}{\longrightarrow} V^{[0,d-1]}[2]
\]
with $V^{[0,d-1]}[1] \in \mathcal{V}$ and $V_0^d[d+1] \in \mathcal{P}[d+1]$.
Furthermore, for $V^{[0,d-1]}[1]$ we have the triangle
\[
V^{[1,d-1]}\stackrel{}{\longrightarrow} V^{[0,d-1]}[1] \stackrel{}{\longrightarrow} V_1^d[d] \stackrel{}{\longrightarrow} V^{[1,d-1]}[1]
\]
with $V^{[1,d-1]}\in\mathcal{P}^{[1,d-1]}$ and $V_1^d[d]\in\mathcal{P}[d]\subseteq\mathcal{V} $.
Also, since $\mathcal{V}$ is closed under cones, we get $V^{[1,d-1]}[1] \in \mathcal{V} $.
Applying the octahedral axiom yields the following commutative diagram of triangles
\[
\xymatrix{
V^{[1,d-1]} \ar@{=}[d] \ar[r] & V^{[0,d-1]}[1] \ar[r] \ar[d] & V_1^d[d] \ar[d]\ar[r]&V^{[1,d-1]}[1]\ar@{=}[d] \\
V^{[1,d-1]} \ar[r] & V[1] \ar[r] \ar[d] & V^{[d,d+1]} \ar[d]\ar[r]&V^{[1,d-1]}[1] \ar[d]\\
& V_0^d[d+1] \ar@{=}[r]\ar[d] & V_0^d[d+1]\ar[r]\ar[d]&V^{[0,d-1]}[2]\\
&V^{[0,d-1]}[2]\ar[r]&V_1^d[d+1].&
}
\]
Thus, $V^{[d,d+1]} \in \mathcal{P}^{[d,d+1]}$ and we have the following triangle
\[
V^{[1,d-1]}[1] \stackrel{}{\longrightarrow} V[2] \stackrel{}{\longrightarrow} V^{[d,d+1]}[1] \stackrel{}{\longrightarrow} V^{[1,d-1]}[2]
\]
with $V^{[1,d-1]}[1] \in \mathcal{V}$ and $V^{[d,d+1]}[1] \in \mathcal{P}^{[d+1,d+2]}$.

By induction, for each integer $0 \leq i \leq d-1$, we have the following triangle
\[
V^{[i,d-1]}[1] \stackrel{}{\longrightarrow} V[i+1] \stackrel{}{\longrightarrow} V^{[d,d+i]}[1] \stackrel{}{\longrightarrow} V^{[i,d-1]}[2]
\]
with $V^{[i,d-1]}[1] \in \mathcal{V}$ and $V^{[d,d+i]}[1] \in \mathcal{P}^{[d+1,d+i+1]}$.

For any $U \in \mathcal{U}$,
applying $\operatorname{Hom}(U, -)$ to the above triangles yields the exact sequences
\[
0=\operatorname{Hom}(U, V^{[i,d-1]}[2])
\stackrel{}{\longrightarrow} \operatorname{Hom}(U, V[i+2])
\stackrel{}{\longrightarrow} \operatorname{Hom}(U, V^{[d,d+i]}[2]) = 0.
\]
Note that $V[j+2]\in \mathcal{P}^{[d+2,+\infty)}\subseteq (\mathcal{P}^{[0,d]})^{\perp_0}$ for any $j \geq d$. So, $\operatorname{Hom}(U, V[\geq 2])=0$.
Hence,
$\operatorname{Hom}(\mathcal{U},\mathcal{V}[\geq 2])=0,$
i.e. the cotorsion pair $(\mathcal{U},\mathcal{V})$ is hereditary.
\end{proof}

Now, we are in a position to give the main theorem as follows.

\begin{theorem}\label{sfbij}
Let $\mathcal{D}$ be a Krull-Schmidt triangulated category. Assume that all $(d+1)$-term silting subcategories with respect to $\mathcal{P}$ are contravariantly finite in $\mathcal{D}$.
Then there are isomorphisms of posets such that the following diagram commutes:
\[
\xymatrix{
(d+1)\text{-}\operatorname{silt}\mathcal{P} \ar[rr]^-{\psi}_-{\cong }\ar[dr]_-{\phi }^{\cong } && \operatorname{h.cotors}\,\mathcal{P}^{[0,d]} \ar[dl]^-{\tau_{\geq -d+1}}_{\cong } \\
& \operatorname{f.}s\operatorname{-tors}\mathcal{H}_{\mathcal{P}}^d &
}
\]
\end{theorem}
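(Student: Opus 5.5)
The plan is to combine Propositions \ref{dsisohc} and \ref{monofs} and to prove directly that the vertical map is surjective.

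\textbf{Step 1 (what is already known).} Under the hypothesis, $(d+1)\operatorname{-conf.silt}\mathcal{P}=(d+1)\operatorname{-silt}\mathcal{P}$. Since $\mathcal{P}^{[0,d]}$ is stable under shifts of $\mathcal{P}$, in particular $\mathcal{P}\in(d+1)\operatorname{-silt}\mathcal{P}$, so $\mathcal{P}$ itself is contravariantly finite in $\mathcal{D}$. Hence Propositions \ref{PequH} and \ref{monofs} apply, and $\phi$ is a poset monomorphism. By Proposition \ref{dsisohc}, $\psi$ is a poset isomorphism.

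\textbf{Step 2 (the vertical map and commutativity).} The proof of Proposition \ref{monofs} shows $\tau_{\ge -d+1}\psi(\mathcal{S})=\phi(\mathcal{S})$. So if we define the vertical map by $\mathcal{V}\mapsto \operatorname{add}\tau_{\ge-d+1}\mathcal{V}$, then the triangle commutes on objects, i.e.\ $\tau_{\ge-d+1}=\phi\circ\psi^{-1}$.

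\textbf{Step 3 (order).} We must check that the vertical map is well defined and order-preserving in both directions. Every hereditary cotorsion class contains $\mathcal{P}[d]$, since $\operatorname{Hom}(\mathcal{P}^{[0,d]},\mathcal{P}[d+1])=0$. Using Proposition \ref{PequH}, such classes $\mathcal{V}\supseteq\mathcal{P}[d]$ correspond bijectively and inclusion-preservingly to their images. It then remains only to show that $\phi$ is surjective: $\phi$ is then a bijective monotone map, $\phi^{-1}=\psi^{-1}\circ\tau_{\ge-d+1}^{-1}$ is monotone, and all three maps are isomorphisms.

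\textbf{Step 4 (surjectivity of $\phi$; the main obstacle).} Let $(\mathcal{T},\mathcal{F})$ be a functorially finite $s$-torsion pair in $\mathcal{H}^d_{\mathcal{P}}$. Put $\mathcal{V}:=\{R\in\mathcal{P}^{[0,d]}\mid \tau_{\ge-d+1}R\in\mathcal{T}\}$; this contains $\mathcal{P}[d]$ and satisfies $\tau_{\ge-d+1}\mathcal{V}=\mathcal{T}$ by Lemma \ref{Ppres}. I would show $\mathcal{V}$ is a hereditary cotorsion class in $\mathcal{P}^{[0,d]}$ through the following sub-steps.

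\emph{Covariant finiteness.} By Corollary \ref{covfequ}, $\mathcal{V}$ is covariantly finite.

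\emph{Closure under cones.} Take an extriangle in $\mathcal{P}^{[0,d]}$ whose first two terms lie in $\mathcal{V}$. By Lemma \ref{tauCtau}, the truncation of its cone equals $\tau_{\ge-d+1}$ of the cone of a map in $\mathcal{T}$. That truncation is an extension of a quotient-type piece of $\mathcal{T}$ and $\mathcal{T}[1]\cap\mathcal{H}^d_{\mathcal{P}}$-type pieces. Since $s$-torsion classes are closed under extensions and under such cones (by \cite{AET23}), it lies in $\mathcal{T}$.

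\emph{Extension closure.} $\mathcal{V}$ is extension-closed similarly.

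\emph{Completeness.} Krull--Schmidt gives minimal left $\mathcal{V}$-approximations $R\to V$. Completing to a triangle $R\to V\to C$ and using cone-closure of $\mathcal{V}$ and $\mathcal{P}[d]\subseteq\mathcal{V}$ to keep $C$ in $\mathcal{P}^{[0,d]}$, Lemma \ref{Wakamatsu} (applied within $\mathcal{V}$ as an extension-closed subcategory) yields $C\in{}^{\perp_1}\mathcal{V}$. This gives the second approximation triangle. The first triangle comes symmetrically from contravariant finiteness of $\mathcal{T}$ lifted via Proposition \ref{PequH}; alternatively, I would invoke \cite{AT22} to get a complete cotorsion pair from a covariantly finite class closed under extensions and cones containing the injective-like objects $\mathcal{P}[d]$.

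\emph{Heredity and conclusion.} Heredity then follows from Lemma \ref{hequs}. Hence $\mathcal{V}=\psi(\mathcal{S})$ for some $\mathcal{S}$, and $\phi(\mathcal{S})=\tau_{\ge-d+1}\mathcal{V}=\mathcal{T}$.

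The delicate point is the cone-closure and completeness check, especially verifying that the Wakamatsu cone stays in $\mathcal{P}^{[0,d]}$ and produces ${}^{\perp_1}$-objects.
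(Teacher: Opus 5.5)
Your Steps 1--3 match the paper's reduction: $\psi$ is an isomorphism, $\phi=\tau_{\geq -d+1}\circ\psi$ is injective, so everything hinges on surjectivity. They also make explicit two points the paper leaves tacit. First, $\mathcal{P}$ is itself a $(d+1)$-term silting subcategory, so Propositions \ref{PequH} and \ref{monofs} apply. Second, $\tau_{\geq -d+1}$ reflects inclusions among subcategories containing $\mathcal{P}[d]$, so bijectivity yields poset isomorphisms. Step 4 follows the paper's plan with the same $\mathcal{V}$, but completeness of $({}^{\perp_1}\mathcal{V}\cap\mathcal{P}^{[0,d]},\mathcal{V})$ is not established.

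\textbf{The first approximation triangle.} The real gap is the triangle $V\to U\to R\to V[1]$ with $V\in\mathcal{V}$ and $U\in{}^{\perp_1}\mathcal{V}\cap\mathcal{P}^{[0,d]}$. It does not come ``symmetrically'' from contravariant finiteness of $\mathcal{T}$. Right $\mathcal{V}$-approximations give, via the dual Wakamatsu lemma, triangles $K\to V\to R$ with $K\in\mathcal{V}^{\perp_1}$. That is the shape for a cotorsion pair with $\mathcal{V}$ as \emph{left} class, and it says nothing about ${}^{\perp_1}\mathcal{V}$; contravariant finiteness of a torsion class is automatic anyway. Your fallback appeal to \cite{AT22} names no specific result.

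The missing idea is Salce's trick, using the projectives $\mathcal{P}\subseteq{}^{\perp_1}\mathcal{P}^{[0,d]}$:
\begin{enumerate}
\item Take $R^{[1,d]}[-1]\to R^0\to R\to R^{[1,d]}$ with $R^0\in\mathcal{P}$ and $R^{[1,d]}[-1]\in\mathcal{P}^{[0,d]}$.
\item Apply the already-built Wakamatsu triangle to $R^{[1,d]}[-1]$, giving $R^{[1,d]}[-1]\to V\to U\to R^{[1,d]}$.
\item The octahedral axiom gives $V\to W\to R\to V[1]$, where $W$ is an extension of $U$ by $R^0$, hence lies in ${}^{\perp_1}\mathcal{V}\cap\mathcal{P}^{[0,d]}$.
\end{enumerate}
This is exactly what the paper does.

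\textbf{Two further steps use the wrong mechanism.}

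\emph{Keeping $C(v)$ in $\mathcal{P}^{[0,d]}$.} Here $C(v)$ is the Wakamatsu cone of a minimal left approximation $v\colon R\to V_R$. Cone-closure of $\mathcal{V}$ is irrelevant, since it concerns cones of maps between objects of $\mathcal{V}$ and $R\notin\mathcal{V}$. What is needed is $\mathcal{P}[d]\subseteq\mathcal{V}$ together with the approximation property. The paper shows that $\binom{\pi_R}{v}\colon R\to R^d[d]\oplus V_R$ is again a left approximation whose cone lies in $\mathcal{P}^{[0,d]}$ (by the $3\times 3$ lemma). It then splits off $C(v)$ as a summand using Krull--Schmidt. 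Alternatively, surjectivity of $\operatorname{Hom}(V_R,P[d])\to\operatorname{Hom}(R,P[d])$ gives $\operatorname{Hom}(C(v),\mathcal{P}[d+1])=0$. An object of $\mathcal{P}^{[0,d+1]}$ with this property is a summand of an object of $\mathcal{P}^{[0,d]}$.

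\emph{Cone-closure.} No decomposition into ``quotient-type'' and ``$\mathcal{T}[1]$-type'' pieces is available in the extriangulated category $\mathcal{H}^d_{\mathcal{P}}$. The working argument is Hom-vanishing against $\mathcal{F}$. Put $C=C(\tau_{\geq -d+1}v)$ and use the triangle $\tau_{\geq -d+1}V_2\to C\to\tau_{\geq -d+1}V_1[1]$. The $s$-condition $\operatorname{Hom}(\mathcal{T}[1],\mathcal{F})=0$ then gives $\operatorname{Hom}(C,\mathcal{F})=0$. By adjunction, since $\mathcal{F}\subseteq\mathcal{D}^{\geq -d+1}$, also $\operatorname{Hom}(\tau_{\geq -d+1}C,\mathcal{F})=0$, so $\tau_{\geq -d+1}C\in{}^{\perp_0}\mathcal{F}\cap\mathcal{H}^d_{\mathcal{P}}=\mathcal{T}$. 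Lemma \ref{tauCtau} identifies this with $\tau_{\geq -d+1}X$.

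With these three repairs your outline becomes the paper's proof.
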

\begin{proof}
Note that $(d+1)\text{-}\operatorname{conf. silt}\mathcal{P}=(d+1)\text{-}\operatorname{silt}\mathcal{P}$, since all $(d+1)$-term silting subcategories with respect to $\mathcal{P}$ are contravariantly finite in $\mathcal{D}$.
By Propositions \ref{dsisohc} and \ref{monofs}, we have the following commutative diagram
\[
\xymatrix{
(d+1)\text{-}\operatorname{silt}\mathcal{P} \ar[rr]^-{\psi}_{\cong }\ar@{>->}[dr]_-{\phi } && \operatorname{h.cotors}\mathcal{P}^{[0,d]} \ar[dl]^-{\tau_{\geq -d+1}} \\
& \operatorname{f.}s\operatorname{-tors}\mathcal{H}_{\mathcal{P}}^d &
}
\]
It suffices to show that
$
\tau_{\geq -d+1}:\operatorname{h.cotors}\mathcal{P}^{[0,d]}\rightarrow \operatorname{f.}s\operatorname{-tors}\mathcal{H}_{\mathcal{P}}^d
$
is an epimorphism, which implies that $\phi$ and $\tau_{\geq -d+1}$ are isomorphisms.

For any $\mathcal{T} \in \operatorname{f.}s\operatorname{-tors}\mathcal{H}_{\mathcal{P}}^d$,
define
\[
\varphi(\mathcal{T}) := \left\{ V \in \mathcal{P}^{[0,d]} \mid \tau_{\geq -d+1} V \in \mathcal{T} \right\}.
\]
Since $\tau_{\geq -d+1} \colon \mathcal{P}^{[0,d]} \to \mathcal{H}_{\mathcal{P}}^d$ is dense, we have $\tau_{\geq -d+1}(\varphi(\mathcal{T})) = \mathcal{T}$. In what follows, we will prove $\varphi(\mathcal{T}) \in \operatorname{h.cotors}\mathcal{P}^{[0,d]}$.

\medskip
\noindent\textbf{Step 1.} $\varphi(\mathcal{T})$ is closed under extensions.

Let $V_1 \stackrel{f}{\longrightarrow} X \stackrel{}{\longrightarrow} V_2 \stackrel{}{\longrightarrow} V_1[1]$
be a triangle in $\mathcal{D}$ with $V_1,\,V_2 \in \varphi(\mathcal{T})$.
By Lemma \ref{tauCtau}, we have
$\tau_{\geq -d+1} C(\tau_{\geq -d+1} f) \cong \tau_{\geq -d+1} V_2 \in \mathcal{T}.$ Let $\mathcal{F}$ be the torsion-free class corresponding to $\mathcal{T}$.
For any $F\in \mathcal{F} $, applying $\operatorname{Hom}(-,F)$ to the triangle:
\[
\tau_{\leq -d} C(\tau_{\geq -d+1} f) \to C(\tau_{\geq -d+1} f) \to \tau_{\geq -d+1} C(\tau_{\geq -d+1} f)\to\tau_{\leq -d} C(\tau_{\geq -d+1} f)[1],
\]
we get $\operatorname{Hom}(C(\tau_{\geq -d+1} f), F) = 0$.
Applying $\operatorname{Hom}(-,F)$ to the triangle
\[
\tau_{\geq -d+1} V_1 \xrightarrow{\tau_{\geq -d+1} f} \tau_{\geq -d+1} X \stackrel{}{\longrightarrow} C(\tau_{\geq -d+1} f)\stackrel{}{\longrightarrow}\tau_{\geq -d+1} V_1[1],
\]
we get $\tau_{\geq -d+1} X \in {}^\perp\mathcal{F} \cap \mathcal{H}_{\mathcal{P}}^d = \mathcal{T}$,
i.e. $X \in \varphi(\mathcal{T})$.

\medskip
\noindent\textbf{Step 2.} $\varphi(\mathcal{T})$ is a cotorsion class in $\mathcal{P}^{[0,d]}$.

Since $\mathcal{P}[d] \subseteq \mathcal{P}^{\perp_{>-d}} = \mathcal{D}^{\leq -d}$, we have $\tau_{\geq -d+1}(\mathcal{P}[d]) = 0$, and then $\mathcal{P}[d] \subseteq \varphi(\mathcal{T})$.
Since $\mathcal{T}$ is covariantly finite and $\tau_{\geq -d+1}(\varphi(\mathcal{T})) = \mathcal{T}$,
by Corollary \ref{covfequ} we get $\varphi(\mathcal{T})$ is covariantly finite in $\mathcal{P}^{[0,d]}$.

For any $R \in \mathcal{P}^{[0,d]}$, take a minimal left $\varphi(\mathcal{T})$-approximation $v \colon R \to V_R$ of $R$. Since $R,\,V_R \in \mathcal{P}^{[0,d]}$, we have the triangles in the two rows of the following diagram
\begin{equation}\label{zjfk}
\xymatrix{
R^{[0,d-1]} \ar[r]^-{\iota_R} & R \ar[r]^-{\pi_R} \ar[d]^{v} & R^d[d]\ar[r]\ar@{-->}[d]^-{\alpha[d]} & R^{[0,d-1]}[1]\\
V_R^{[0,d-1]} \ar[r]^-{\iota_V} & V_R \ar[r]^-{\pi_V} & V_R^d[d]\ar[r] & R^{[0,d-1]}[1]
}\end{equation}
with $R^{[0,d-1]},\,V_R^{[0,d-1]} \in \mathcal{P}^{[0,d-1]}$ and $R^d,\,V_R^d \in \mathcal{P}$.
Noting that $\operatorname{Hom}(R^{[0,d-1]}, V_R^d[d]) = 0$, we have $\pi_V\circ v\circ\iota_R=0$.
Thus, there exists $\alpha \colon R^d \to V_R^d$ such that the middle square in \eqref{zjfk} commutes.

By \cite[Remark 2.11(2)]{NP19} and the $3\times 3$ lemma, we obtain the following diagram of triangles
\[
\xymatrix{
R^{[0,d-1]} \ar[r] \ar[d] & R \ar[r]^-{\pi_R} \ar[d]^-{\binom{\pi_R}{v}}  & R^d[d] \ar[d]^-{\binom{id}{\alpha[d]}}\ar[r]            &R^{[0,d-1]}[1]\ar[d] \\
V_R^{[0,d-1]} \ar[r]\ar[d]& R^d[d] \oplus V_R \ar[r]^-{\left(\begin{smallmatrix}id&0\\0&\pi_V\end{smallmatrix}\right)}\ar[d]     & R^d[d] \oplus V_R^d[d] \ar[d]^{(-\alpha[d]~id)}\ar[r] &V_R^{[0,d-1]}[1]\ar[d] \\
Q \ar[r]\ar[d]          & C(\binom{\pi_R}{v}) \ar[r]\ar[d]                           & V_R^d[d]\ar[r]\ar[d]&Q[1]\ar[d]\\
R^{[0,d-1]}[1] \ar[r]  & R[1] \ar[r]^-{\pi_R[1]}   & R^d[d+1] \ar[r]             &R^{[0,d-1]}[2].
}
\]
Since $Q,\,V_R^d[d] \in \mathcal{P}^{[0,d]}$, we have $C(\binom{\pi_R}{v}) \in \mathcal{P}^{[0,d]}$. Noting that $R^d[d] \in \mathcal{P}[d] \subseteq \varphi(\mathcal{T})$, we obtain the morphism $\binom{\pi_R}{v}$ is also a left $\varphi(\mathcal{T})$-approximation of $R$.

By the Krull-Schmidt property of $\mathcal{D}$, there exists the following isomorphism of triangles
\[
\xymatrix{
R \ar[r]^-{\binom{\pi_R}{v}} \ar@{=}[d] & R^d[d] \oplus V_R \ar[r] \ar[d]^\cong & C(\binom{\pi_R}{v}) \ar[d]^\cong\ar[r]&R[1]\ar@{=}[d] \\
R \ar[r]^-{\binom{0}{v}} & R^d[d] \oplus V_R \ar[r]^{\left(\begin{smallmatrix}id&0\\0&\pi_V\end{smallmatrix}\right)} &R^d[d] \oplus C(v)\ar[r]&R[1].
}
\]
Thus $C(v) \in \mathcal{P}^{[0,d]}$. By Lemma \ref{Wakamatsu}, we conclude $C(v) \in {^{\perp_1}\varphi(\mathcal{T})}$.

On the other hand, we have a triangle
$$R^{[1,d]}[-1] \stackrel{}{\longrightarrow} R^0\stackrel{}{\longrightarrow} R \stackrel{}{\longrightarrow} R^{[1,d]}$$
such that $R^{[1,d]}\in\mathcal{P}^{[1,d]}$ and $R^0\in\mathcal{P} $.
Since $R^{[1,d]}[-1] \in \mathcal{P}^{[0,d]}$, by the above arguments, we have a triangle
$$R^{[1,d]}[-1] \stackrel{}{\longrightarrow} V \stackrel{}{\longrightarrow} U \stackrel{}{\longrightarrow} R^{[1,d]}$$
such that $V\in\varphi(\mathcal{T})$ and $U\in{^{\perp_1}\varphi(\mathcal{T})}\cap\mathcal{P}^{[0,d]}$.
Applying the octahedral axiom, we obtain the following commutative diagram of triangles
\[
\xymatrix{
R[-1] \ar[r] \ar@{=}[d] & R^{[1,d]}[-1] \ar[r] \ar[d] & R^0 \ar[r] \ar[d] & R \ar@{=}[d] \\
R[-1] \ar[r] & V \ar[r] \ar[d] & W \ar[r] \ar[d] & R\ar[d] \\
& U \ar@{=}[r]\ar[d] & U\ar[r]\ar[d]&R^{[1,d]}\\
&R^{[1,d]}\ar[r]&R^0[1].&
}
\]
Since $R^0 \in \mathcal{P} \subseteq {^{\perp_1}\mathcal{P}^{[0,d]}} \subseteq {^{\perp_1}\varphi(\mathcal{T})}$ and $U \in {^{\perp_1}\varphi(\mathcal{T})}\cap\mathcal{P}^{[0,d]}$,
we have $W \in {^{\perp_1}\varphi(\mathcal{T})}\cap\mathcal{P}^{[0,d]}$. Hence, $({^{\perp_1}\varphi(\mathcal{T})}\cap\mathcal{P}^{[0,d]}, \varphi(\mathcal{T}))$ is a cotorsion pair in $\mathcal{P}^{[0,d]}$.

\medskip
\noindent\textbf{Step 3.} The cotorsion pair $({^{\perp_1}\varphi(\mathcal{T})}\cap\mathcal{P}^{[0,d]}, \varphi(\mathcal{T}))$ is hereditary.

By Lemma \ref{hequs}, it suffices to show that $\varphi(\mathcal{T})$ is closed under cones.
Let
\[
V_1 \stackrel{v}{\longrightarrow} V_2 \stackrel{}{\longrightarrow} X \stackrel{}{\longrightarrow} V_1[1]
\]
be a triangle with $V_1,V_2\in\varphi(\mathcal{T})$ and $X\in\mathcal{P}^{[0,d]}$.
For any $F\in\mathcal{F}$, applying $\operatorname{Hom}(-,F)$ to the triangle
\[
\tau_{\geq -d+1}V_1\stackrel{\tau_{\geq -d+1}v}{\longrightarrow} \tau_{\geq -d+1}V_2\stackrel{}{\longrightarrow}C(\tau_{\geq -d+1}v)\stackrel{}{\longrightarrow}\tau_{\geq -d+1}V_1[1],
\]
we obtain an exact sequence
\[
\operatorname{Hom}(\tau_{\geq -d+1}V_1[1], F)
\to \operatorname{Hom}(C(\tau_{\geq -d+1}v), F)
\to \operatorname{Hom}(\tau_{\geq -d+1}V_2, F)=0.
\]
Noting that $(\mathcal{T},\mathcal{F})$ is an $s$-torsion pair in $\mathcal{H}_{\mathcal{P}}^d$,
we have $\operatorname{Hom}(\tau_{\geq -d+1}V_1[1], \mathcal{F}) = 0$
and then $\operatorname{Hom}(C(\tau_{\geq -d+1}v), F)=0$.
Applying $\operatorname{Hom}(-,F)$ to the triangle
\[
\tau_{\leq -d} C(\tau_{\geq -d+1}v) \to C(\tau_{\geq -d+1}v) \to \tau_{\geq -d+1} C(\tau_{\geq -d+1}v)\to\tau_{\leq -d} C(\tau_{\geq -d+1}v)[1],
\]
we obtain an exact sequence
\[
\operatorname{Hom}(\tau_{\leq -d} C(\tau_{\geq -d+1}v)[1], F)
\to \operatorname{Hom}(\tau_{\geq -d+1} C(\tau_{\geq -d+1}v), F)
\to \operatorname{Hom}(C(\tau_{\geq -d+1}v), F).
\]
Since $\operatorname{Hom}(\tau_{\leq -d} C(\tau_{\geq -d+1}v)[1], F)=0$ and $\operatorname{Hom}(C(\tau_{\geq -d+1}v), F) = 0$, by Lemma \ref{tauCtau},
we get $\tau_{\geq -d+1} X \cong \tau_{\geq -d+1} C(\tau_{\geq -d+1}v) \in \mathcal{T},$
i.e. $X\in\varphi(\mathcal{T}).$ Hence, we finish the proof.
\end{proof}

\section{Applications to DG algebras}
In this section, let $A$ be a non-positive dg algebra over a field $K$ whose total cohomology $H^*(A)$ is finite dimensional. Denote by $D(A)$ the derived category of (right) dg $A$-modules.
Let $D_{\rm fd}(A)$ be the full subcategory of $D(A)$ consisting of dg
$A$-modules $M$ with finite-dimensional total cohomology $H^\ast(M)$.
Then $A_A \in D_{\rm fd}(A)$ and $D_{\rm fd}(A)$ admits a standard $t$-structure
$(D_{\rm fd}(A)^{\leq 0}, D_{\rm fd}(A)^{\geq 1})$, where
\[
D_{\rm fd}(A)^{\leq 0} := \{ M \in D_{\rm fd}(A) \mid H^i(M) = 0 \text{ for all } i>0 \} = A^{\perp_{>0}}
\]
and
\[
D_{\rm fd}(A)^{\geq 1} := \{ M \in D_{\rm fd}(A) \mid H^i(M) = 0 \text{ for all } i \leq 0 \} = A^{\perp_{\leq 0}}.
\]
The heart of this $t$-structure is equivalent to the category $\mathrm{mod}(H^0(A))$ of finite dimensional modules over $H^0(A)$.
By \cite[Proposition 6.12]{AMY19}, ${D}_{\rm fd}(A)$ is Hom-finite.
Note that $D_{\rm fd}(A)$ is closed under direct summands in ${D}(A)$ and thus ${D}_{\rm fd}(A)$ is idempotent complete. Hence, ${D}_{\rm fd}(A)$ is a Hom-finite Krull-Schmidt triangulated category.
Therefore, the subcategories $\mathcal{H}_A^d:=D_{\rm fd}(A)^{\leq 0} \cap {D}_{\rm fd}(A)^{\geq -d+1}$ and $\mathrm{per}(A):={\bf thick}_{D_{\rm fd}(A)}(A)$ of ${D}_{\rm fd}(A)$ are also Krull-Schmidt.

Since $A$ is a silting object of $\mathrm{per}(A)$, i.e. the additive closure $\mathrm{add}(A)$ of $A$ in $\mathrm{per}(A)$ is a
silting subcategory of $\mathrm{per}(A)$, by \cite[Proposition 2.20]{AI12},
any silting subcategory of $\mathrm{per}(A)$ is of the form $\mathrm{add}(T)$ for
some object $T\in\mathrm{per}(A)$, and thus is contravariantly finite in ${D}_{\rm fd}(A)$.
Since $H^*(A)$ is finite dimensional, there exists a positive integer $d$ such that
$A \in \mathcal{H}_A^d$.
Set $(d+1)$-$\mathrm{silt}\,A := (d+1)$-$\mathrm{silt}\,\mathrm{add}(A)$.
As an application of Theorem \ref{sfbij}, we have the following.

\begin{corollary}
There are isomorphisms of posets such that the following diagram commutes
\[
\xymatrix{
(d+1)\text{-}\mathrm{silt}\,A \ar[rr]^{\cong} \ar[rd]_{\cong} & & \operatorname{h.cotors}\,\mathrm{add}(A)^{[0,d]} \ar[ld]^{\cong} \\
& \operatorname{f.}s\operatorname{-tors}\mathcal{H}_A^d &
}
\]
\end{corollary}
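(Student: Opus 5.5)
This corollary should follow by applying Theorem \ref{sfbij} with $\mathcal{D}=D_{\rm fd}(A)$ and $\mathcal{P}=\mathrm{add}(A)$; the work is checking its hypotheses. The proof will therefore run through the standing assumptions of Section 3 and those of the theorem in turn.

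First, $\mathcal{P}=\mathrm{add}(A)$ is presilting in $D_{\rm fd}(A)$. Indeed, $\operatorname{Hom}(A,A[i])\cong H^i(A)=0$ for $i>0$, because $A$ is non-positive. Second, the pair $(\mathcal{P}^{\perp_{>0}},\mathcal{P}^{\perp_{\leq 0}})$ coincides with the standard $t$-structure on $D_{\rm fd}(A)$. This uses $\operatorname{Hom}(A,M[i])\cong H^i(M)$, which gives exactly the identifications
\[
D_{\rm fd}(A)^{\leq 0}=A^{\perp_{>0}},\qquad D_{\rm fd}(A)^{\geq 1}=A^{\perp_{\leq 0}}
\]
recorded above. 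So $t$ is a $t$-structure with heart $\mathrm{mod}\,H^0(A)$. Third, $\mathcal{K}(\mathcal{P})={\bf thick}(A)=\mathrm{per}(A)$, and $\mathcal{H}^d_{\mathcal{P}}=\mathcal{H}^d_A$ by definition. Consequently $(d+1)$-$\mathrm{silt}\,A$, $\operatorname{h.cotors}\,\mathrm{add}(A)^{[0,d]}$ and $\operatorname{f.}s\operatorname{-tors}\mathcal{H}_A^d$ are exactly the three posets appearing in Theorem \ref{sfbij}.

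Next come the two hypotheses of the theorem. The Krull--Schmidt property of $D_{\rm fd}(A)$ was already established above: the category is Hom-finite by \cite[Proposition 6.12]{AMY19} and idempotent complete because it is closed under direct summands in $D(A)$. For contravariant finiteness, every silting subcategory of $\mathrm{per}(A)$, and in particular every $(d+1)$-term one, has the form $\mathrm{add}(T)$ for some object $T$ by \cite[Proposition 2.20]{AI12}. In a Hom-finite category, $\mathrm{add}(T)$ is contravariantly finite: for $M\in D_{\rm fd}(A)$, choose a finite $K$-basis $f_1,\dots,f_n$ of $\operatorname{Hom}(T,M)$. The induced map $T^{n}\to M$ is then a right $\mathrm{add}(T)$-approximation, and this extends to direct summands of powers of $T$.

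The one point needing care, and the main potential obstacle, is that the theorem also uses contravariant finiteness of $\mathcal{P}$ itself. This enters through Lemma \ref{Ppres}, Proposition \ref{PequH} and Corollary \ref{covfequ}. However, $\mathcal{P}=\mathrm{add}(A)$ is the special case $T=A$ of the argument just given. The hypothesis $A\in\mathcal{H}^d_A$ is not logically needed for the theorem; it only ensures that $\mathcal{P}$ lies in the $d$-range under consideration. With all hypotheses verified, Theorem \ref{sfbij} yields the commuting triangle of poset isomorphisms $\psi$, $\phi$ and $\tau_{\geq -d+1}$, which is precisely the statement of the corollary.
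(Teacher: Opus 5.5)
Your proposal is correct and follows the paper's own argument. The paper obtains the corollary by applying Theorem \ref{sfbij} with $\mathcal{D}=D_{\rm fd}(A)$ and $\mathcal{P}=\mathrm{add}(A)$, after checking in the paragraph before the corollary that $D_{\rm fd}(A)$ is Hom-finite Krull--Schmidt, that the standard $t$-structure is $(A^{\perp_{>0}},A^{\perp_{\leq 0}})$, and that every silting subcategory of $\mathrm{per}(A)$ has the form $\mathrm{add}(T)$ and is therefore contravariantly finite. Your separate check that $\mathrm{add}(A)$ itself is contravariantly finite is valid but redundant: $\mathrm{add}(A)$ is itself a $(d+1)$-term silting subcategory, so the theorem's hypothesis already covers it.
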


For convenience, let us briefly recall the construction of $\mathrm{add}(A)$-presentation for any object in $\mathcal{H}_A^d$.
One can refer to \cite[Lemma 2.3]{WZ25} and \cite[Lemma 2.7]{MP25} for more details.

For any $M \in \mathcal{H}_A^d$ and $0\leq i\leq d$, we have the following triangles in ${D}_{\rm fd}(A)$:
\[
\Omega^{i+1}(M) \stackrel{\iota^i}{\longrightarrow} P^i \stackrel{\pi^i}{\longrightarrow} \Omega^i(M) \stackrel{h^i}{\longrightarrow}\Omega^{i+1}(M)[1],
\]
where $\Omega^0(M)=M$ and $\pi^i$ is the minimal right $\mathrm{add}(A)$-approximation of $\Omega^i(M)$.

Let $h:= h^d[d] \circ \cdots \circ h^1[1] \circ h^0:M \to \Omega^{d+1}(M)[d+1]$.
According to the proof of \cite[Lemma 2.7]{MP25}, applying the octahedral axiom inductively, we finally obtain the following commutative diagram of triangles
\begin{equation}\label{ppst}
\xymatrix{& P^{[0,d-1]} \ar@{=}[r] \ar[d] & P^{[0,d-1]} \ar[d] \\
\Omega^{d+1}(M)[d] \ar[r] \ar@{=}[d] & p(M) \ar[r] \ar[d] & M \ar[r]^-h\ar[d] & \Omega^{d+1}(M)[d+1] \ar@{=}[d] \\
\Omega^{d+1}(M)[d] \ar[r]^-{\iota^d[d]} & P^d[d] \ar[r]^-{\pi^d[d]}\ar[d] & \Omega^d(M)[d] \ar[r]^-{h^d[d]}\ar[d] & \Omega^{d+1}(M)[d+1]\ar[d]\\
&P^{[0,d-1]}[1] \ar@{=}[r] & P^{[0,d-1]}[1]\ar[r]&p(M)[1]}\end{equation}
where $P^{[0,d-1]}\in \mathrm{add}(A)^{[0,d-1]}$ and $p(M)$ is an $\mathrm{add}(A)$-presentation of $M$.
\begin{remark}
We claim that the $p(M)$ in \eqref{ppst} does not have $P[d]$ as direct summands for
any nonzero $P \in \mathrm{add}(A)$.
Otherwise, since $\mathrm{Hom}(P^{[0,d-1]}, P[d]) = 0$ and $\mathrm{Hom}(P[d], M) = 0$,
the morphisms $p(M) \to P^d[d]$ and $\Omega^{d+1}(M)[d] \to p(M)$ in \eqref{ppst} have $P[d] \xrightarrow{id} P[d]$ as direct summands.
Then $\iota^d[d]$ has $P[d] \xrightarrow{id} P[d]$ as direct summands.
This contradicts the right minimality of $\pi^d[d]$.
Furthermore, by Proposition \ref{PequH}, $p(M)$ is unique up to isomorphism.
For any $X\in D_{\rm fd}(A)$, denote by $|X|$ the number of non-isomorphic indecomposable direct summands of $X$. Then we have $|p(M)| = |M|$. In what follows, for each $M \in \mathcal{H}_A^d$, we fix the notation $p(M)$ for the $\mathrm{add}(A)$-presentation of $M$ given in \eqref{ppst}.
\end{remark}

An object $P$ of $\mathcal{H}_A^d$ is called \textit{projective} if $\mathrm{Hom}(P, M[1]) = 0$ for any $M \in \mathcal{H}_A^d$.
Dually, an object $I$ of $\mathcal{H}_A^d$ is called \textit{injective} if $\mathrm{Hom}(M, I[1]) = 0$ for any $M \in \mathcal{H}_A^d$.
Denote by $\mathcal{P}(\mathcal{H}_A^d)$ and $\mathcal{I}(\mathcal{H}_A^d)$ the full subcategories of projective and injective objects in $\mathcal{H}_A^d$, respectively.
Let $\underline{\mathcal{H}_A^d}:=\mathcal{H}_A^d/\mathcal{P}(\mathcal{H}_A^d)$ and $\overline{\mathcal{H}_A^d}:=\mathcal{H}_A^d/\mathcal{I}(\mathcal{H}_A^d)$.
The category equivalence $\tau:\underline{\mathcal{H}_A^d}\to\overline{\mathcal{H}_A^d}$ defined in \cite{MP25} maps $M$ to $\tau_{\leq 0}(\nu (p(M))[-1])$, where $\nu:D_{\rm fd}(A)\to D_{\rm fd}(A)$ denotes the derived Nakayama functor. Analogous to \cite[Lemma 4.8]{Z24}, we have the following.

\begin{lemma}\label{moriso}
Let $M,N \in \mathcal{H}_A^d$ and $P \in \mathrm{add}(A)$. For any $i\geq 1$, there are isomorphisms
\[
\mathrm{Hom}(p(M), p(N)[i]) \cong \mathrm{Hom}(p(M), N[i])\cong \mathbb{D}\mathrm{Hom}(N, \tau(M)[1-i]),
\]
where $\mathbb{D}:=\mathrm{Hom}_K(-,K)$ denotes the $K$-linear duality.
\end{lemma}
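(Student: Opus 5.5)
The plan is to prove the two isomorphisms separately. The first one uses the $t$-decomposition of $p(N)$ together with the vanishing used in Proposition \ref{PequH}. The second one uses Serre duality for $\nu$ on $\mathrm{per}(A)\times D_{\rm fd}(A)$ and then a truncation argument.

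\textbf{First isomorphism.} Since $p(N)\in\mathrm{add}(A)^{[0,d]}$ and $\tau_{\geq -d+1}p(N)\cong N$, we have the triangle
\[\tau_{\leq -d}p(N)\to p(N)\to N\to \tau_{\leq -d}p(N)[1].\]
Apply $\mathrm{Hom}(p(M),-[i])$ to it. The outer terms are $\mathrm{Hom}(p(M),\tau_{\leq -d}p(N)[i])$ and $\mathrm{Hom}(p(M),\tau_{\leq -d}p(N)[i+1])$, both with $i\geq1$. Because $\tau_{\leq -d}p(N)[i]\in D_{\rm fd}(A)^{\leq -d-i}\subseteq D_{\rm fd}(A)^{\leq -d-1}=A^{\perp_{\geq -d}}$, and $p(M)\in\mathrm{add}(A)^{[0,d]}$, both terms vanish. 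This is exactly the computation in the proof of Proposition \ref{monofs}. Hence $\mathrm{Hom}(p(M),p(N)[i])\cong \mathrm{Hom}(p(M),N[i])$.

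\textbf{Second isomorphism.} For $X\in\mathrm{per}(A)$ and $Y\in D_{\rm fd}(A)$, the derived Nakayama functor satisfies $\mathrm{Hom}(X,Y)\cong\mathbb{D}\mathrm{Hom}(Y,\nu X)$. Applying this gives
\[\mathrm{Hom}(p(M),N[i])\cong \mathbb{D}\mathrm{Hom}(N[i],\nu p(M))\cong\mathbb{D}\mathrm{Hom}(N,\nu(p(M))[-1][1-i]).\]
It then remains to replace $\nu(p(M))[-1]$ by $\tau(M)=\tau_{\leq 0}(\nu(p(M))[-1])$. Use the triangle
\[\tau(M)\to \nu(p(M))[-1]\to \tau_{\geq 1}(\nu(p(M))[-1])\to \tau(M)[1]\]
and apply $\mathrm{Hom}(N,-[1-i])$. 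The third term gives $\mathrm{Hom}(N,\tau_{\geq1}(\nu p(M)[-1])[1-i])$, whose target lies in $D^{\geq i}\subseteq D^{\geq1}$. Since $N\in D^{\leq 0}$, this group vanishes. The shifted first term is $\mathrm{Hom}(N,\tau_{\geq1}(\nu p(M)[-1])[-i])$, whose target lies in $D^{\geq 1+i}$, so it also vanishes. Hence $\mathrm{Hom}(N,\tau(M)[1-i])\cong\mathrm{Hom}(N,\nu(p(M))[-1][1-i])$, and composing the isomorphisms gives the claim.

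\textbf{Main obstacle.} The delicate point is justifying Serre duality for $\nu$ in this setting. It is the standard relative Serre duality $\mathrm{Hom}(X,Y)\cong\mathbb{D}\mathrm{Hom}(Y,\nu X)$ for $X\in\mathrm{per}(A)$, valid because $H^*(A)$ is finite dimensional. One must also check that $\nu p(M)\in D_{\rm fd}(A)$, which holds since $\nu A=\mathbb{D}A$. I would cite the construction of $\tau$ in \cite{MP25} for this. The remaining truncation steps are routine once one observes that $t$-structure vanishing only requires $N\in D^{\leq0}$ and $i\geq1$.
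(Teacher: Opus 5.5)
Your proof is correct, and it is essentially the argument the paper intends; the paper gives no proof of its own and only points to the analogous \cite[Lemma 4.8]{Z24}. You get the first isomorphism from the triangle $\tau_{\leq -d}p(N)\to p(N)\to N\to\tau_{\leq -d}p(N)[1]$ together with $\mathrm{Hom}(\mathrm{add}(A)^{[0,d]},D_{\rm fd}(A)^{\leq -d-1})=0$, and the second from relative Serre duality for $\nu$ followed by the triangle $\tau(M)\to\nu(p(M))[-1]\to\tau_{\geq 1}(\nu(p(M))[-1])\to\tau(M)[1]$. One small correction to your justification of the duality: $\mathrm{Hom}(X,Y)\cong\mathbb{D}\mathrm{Hom}(Y,\nu X)$ needs $\mathrm{Hom}(X,Y)$ to be finite dimensional, which holds because $Y\in D_{\rm fd}(A)$; finiteness of $H^*(A)$ is what places $\nu(p(M))$ in $D_{\rm fd}(A)$.
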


Following \cite{Z24}, we generalize the $\tau$-tilting theory in $\mathrm{mod}(H^0(A))$ to $\mathcal{H}_A^d$.

\begin{definition}\cite[Definition 1.13]{Z24}
Let $\mathcal{M}$ be a subcategory of $\mathcal{H}^d_A$, and let $n$ be a positive integer.
Suppose there exist extriangles in $\mathcal{H}^d_A$
\begin{equation}\label{ejiao}
X_{i+1}\stackrel{}{\longrightarrow}M_{i+1}\stackrel{}{\longrightarrow}X_i\stackrel{}\dashrightarrow\end{equation}
with $M_{i+1} \in \mathcal{M}$, $0\leq i\leq n-1$. Then $X_0$ is called an {\em $n$-factor} of $\mathcal{M}$.
We denote by $\mathrm{Fac}_n(\mathcal{M})$ the subcategory of $\mathcal{H}_A^d$ consisting of all $n$-factors of $\mathcal{M}$.
\end{definition}

\begin{definition}\cite[Definitions 4.1, 4.4 and 4.5]{Z24}
  {\rm (a)} A dg $A$-module $M\in\mathcal{H}_A^d$ is called \textit{positive $\tau$-rigid} if
 $\mathrm{Hom}(M, \tau(M)[\leq 0]) = 0.$

  {\rm (b)} A pair $(M,P)$ with $M \in \mathcal{H}_A^d$ and $P \in \mathrm{add}(A)$ is called a \textit{positive $\tau$-rigid pair}
  if $M$ is positive $\tau$-rigid and $\mathrm{Hom}(P, M[\leq 0]) = 0.$

  {\rm (c)} A positive $\tau$-rigid pair $(M,P)$ is called \textit{$\tau$-tilting} if
\[
{}^{\perp_{\leq 0}}(\tau(M)) \cap P^{\perp_{\leq 0}} \cap \mathcal{H}_A^d =\mathrm{Fac}_d(\mathrm{add}(M))=:\mathrm{Fac}_d(M).
\]

An object in $\mathcal{H}_A^d$ is called {\em basic}, if its indecomposable direct summands are pairwise non-isomorphic.
A positive $\tau$-rigid pair or $\tau$-tilting pair $(M,P)$ is said to be {\em basic}, if $M$ and $P$ are basic.
We denote by $\tau\text{-}\mathrm{tiltp}\,\mathcal{H}_A^d$ the set consisting of all basic $\tau$-tilting pairs in $\mathcal{H}^d_A$.
For any $(M_1,P_1),(M_2,P_2)\in \tau\text{-}\mathrm{tiltp}\,\mathcal{H}_A^d$, define $(M_1,P_1)\leq(M_2,P_2)$ if $\mathrm{Fac}_d(M_1)\subseteq \mathrm{Fac}_d(M_2)$.
\end{definition}

\begin{remark}\label{rigidtotilt}
  Let $(M,P)$ be a positive $\tau$-rigid pair of $\mathcal{H}_A^d$.
  By \cite[Lemma 1.19]{Z24}, we have
  $\mathrm{Fac}_d(M)\subseteq {^{\perp_{\leq 0}}(\tau(M))}.$
  By the definition of $\mathrm{Fac}_d(M)$, applying $\operatorname{Hom}(P,-)$ to the extriangles in \eqref{ejiao},
  we get $\mathrm{Fac}_d(M)\subseteq P^{\perp_{[-d+1,0]}}.$
  Since $\mathrm{Fac}_d(M)\subseteq \mathcal{H}_A^d\subseteq A^{\perp_{\leq -d}},$
  we have $\mathrm{Fac}_d(M)\subseteq{}^{\perp_{\leq 0}}(\tau(M)) \cap P^{\perp_{\leq 0}} \cap \mathcal{H}_A^d.$
  Thus, a positive $\tau$-rigid pair $(M,P)$ is $\tau$-tilting if and only if
  ${}^{\perp_{\leq 0}}(\tau(M)) \cap P^{\perp_{\leq 0}} \cap \mathcal{H}_A^d \subseteq \mathrm{Fac}_d(M).$
\end{remark}

\begin{definition}\cite[Definition 2.7]{WZ25}
  Let $\mathcal{M}$ be a subcategory of $\mathcal{H}_A^d$.
  If it admits an $\mathrm{add}(A)$-presentation $\mathcal{S}$ such that
 $\mathcal{S}^{\perp_{>0}}\cap \mathcal{H}_A^d=\mathrm{Fac}_d(\mathcal{M}),$
  then $\mathcal{M}$ is called an \textit{AIR tilting} subcategory of $\mathcal{H}_A^d$ with respect to $\mathcal{S}$.
  We denote by $\mathrm{AIR}\text{-}\mathrm{tilt}\,\mathcal{H}_A^d$ the collection consisting of all AIR tilting subcategories of $\mathcal{H}^d_A$.
  Similar to $\tau\text{-}\mathrm{tiltp}\,\mathcal{H}_A^d$, a partial order relation can be defined on $\mathrm{AIR}\text{-}\mathrm{tilt}\,\mathcal{H}_A^d$.
\end{definition}

\begin{lemma}\cite[Corollary 2.16]{WZ25}
  Let $\mathcal{M}$ be a subcategory of $\mathcal{H}_A^d$.
  If $\mathcal{M}$ is AIR tilting with respect to its two $\mathrm{add}(A)$-presentations $\mathcal{S}$ and $\mathcal{S}'$,
  then $\mathcal{S}=\mathcal{S}'$.
\end{lemma}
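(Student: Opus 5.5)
The plan is to reduce the statement to an indecomposable-by-indecomposable comparison using Proposition~\ref{PequH}, and then to handle the only remaining ambiguity, the summands lying in $\mathrm{add}(A)[d]$, by a presilting/maximality argument.

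\textbf{Step 1 (summands outside $\mathrm{add}(A)[d]$).} Since $D_{\rm fd}(A)$ is Krull--Schmidt and $\mathcal{S}=\operatorname{add}\mathcal{S}$, the subcategory $\mathcal{S}$ is the additive closure of its indecomposable objects. The same holds for $\mathcal{S}'$. By Proposition~\ref{PequH}, $\tau_{\geq -d+1}$ induces an equivalence $\mathrm{add}(A)^{[0,d]}/\mathrm{add}(A)[d]\simeq\mathcal{H}_A^d$. Hence an indecomposable $X\in\mathcal{S}$ is of one of two kinds:
\begin{itemize}
\item[(i)] $X$ lies in $\mathrm{add}(A)[d]$, so $\tau_{\geq-d+1}X=0$;
\item[(ii)] $\tau_{\geq -d+1}X$ is indecomposable and $X$ is its unique lift without summands in $\mathrm{add}(A)[d]$. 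By the Remark after \eqref{ppst}, this lift is $X\cong p(\tau_{\geq-d+1}X)$.
\end{itemize}
Since $\mathrm{add}(\tau_{\geq -d+1}\mathcal{S})=\mathcal{M}=\mathrm{add}(\tau_{\geq -d+1}\mathcal{S}')$, the indecomposables of kind (ii) in $\mathcal{S}$ and in $\mathcal{S}'$ are exactly the $p(N)$ with $N$ indecomposable in $\mathcal{M}$. So $\mathcal{S}$ and $\mathcal{S}'$ can differ only in their indecomposable summands of the form $P[d]$ with $P\in\mathrm{add}(A)$.

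\textbf{Step 2 ($\mathrm{add}(\mathcal{S}\cup\mathcal{S}')$ is presilting).} For $S\in\mathcal{S}$, $R\in\mathrm{add}(A)^{[0,d]}$ and $i>0$, the argument in the proof of Proposition~\ref{monofs} applies because $D_{\rm fd}(A)^{\leq -d}=A^{\perp_{>0}}[d]\subseteq\mathcal{S}^{\perp_{>0}}$. It gives
\[
\mathrm{Hom}(S,R[i])\cong\mathrm{Hom}(S,\tau_{\geq-d+1}R[i]).
\]
Take $R\in\mathcal{S}\cup\mathcal{S}'$. Then $\tau_{\geq -d+1}R\in\mathcal{M}\subseteq\mathrm{Fac}_d(\mathcal{M})=\mathcal{S}^{\perp_{>0}}\cap\mathcal{H}_A^d$, so the right-hand side vanishes. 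By symmetry the same holds with the roles of $\mathcal{S}$ and $\mathcal{S}'$ exchanged. Hence $\mathrm{add}(\mathcal{S}\cup\mathcal{S}')$ is presilting and contained in $\mathrm{add}(A)^{[0,d]}$.

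\textbf{Step 3 (maximality).} This is where I expect the main obstacle to be. It suffices to show that $\mathcal{S}$, and likewise $\mathcal{S}'$, is silting in $\mathrm{per}(A)$. Silting subcategories are maximal among presilting ones: if $\mathcal{S}\subseteq\mathcal{S}''$ with $\mathcal{S}$ silting and $\mathcal{S}''$ presilting, then $\mathcal{S}''=\mathcal{S}$, for instance via Lemma~\ref{siltiff}, or via $|\mathcal{S}|=|A|$ from \cite{AI12}. Applying this to $\mathcal{S}\subseteq\mathrm{add}(\mathcal{S}\cup\mathcal{S}')\supseteq\mathcal{S}'$ would give $\mathcal{S}=\mathrm{add}(\mathcal{S}\cup\mathcal{S}')=\mathcal{S}'$.

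To prove that $\mathcal{S}$ is silting, I would first try the following route.
\begin{itemize}
\item Observe that $\mathcal{T}:=\mathcal{S}^{\perp_{>0}}\cap\mathcal{H}_A^d=\mathrm{Fac}_d(\mathcal{M})$ is a functorially finite $s$-torsion class. This should follow from the presilting property together with the presentation.
\item Apply Theorem~\ref{sfbij} to obtain a $(d+1)$-term silting $\mathcal{R}$ with $\phi(\mathcal{R})=\mathcal{T}$.
\item Using the cotorsion pair $\psi(\mathcal{R})=\varphi(\mathcal{T})$, show that $\mathcal{S}\subseteq{}^{\perp_1}\varphi(\mathcal{T})\cap\varphi(\mathcal{T})$. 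The second inclusion holds because $\tau_{\geq -d+1}\mathcal{S}\subseteq\mathcal{T}$, and the first comes from the isomorphism in Step~2. This intersection is the core $\mathcal{R}$ of the hereditary cotorsion pair.
\item Show that every indecomposable of $\mathcal{R}$ lies in $\mathcal{S}$. For kind (ii) this is Step~1 applied to $\mathrm{add}(\tau_{\geq-d+1}\mathcal{R})$. For the summands $P[d]$ of $\mathcal{R}$, one has $\mathrm{Hom}(P,\mathcal{T}[>-d])=0$. I expect this to force $P[d]\in\mathcal{S}$ precisely because $\mathcal{T}=\mathcal{S}^{\perp_{>0}}\cap\mathcal{H}_A^d$ and not merely $\supseteq$. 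Making this step precise is the main difficulty.
\end{itemize}

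If this route stalls, I would instead compare the $\mathrm{add}(A)[d]$-parts of $\mathcal{S}$ and $\mathcal{S}'$ directly. The aim would be to characterize them intrinsically as all $P[d]$ with $\mathrm{Hom}(P,\mathrm{Fac}_d(\mathcal{M})[j])=0$ for $-d<j\leq 0$. Such a characterization depends only on $\mathcal{M}$, which would give $\mathcal{S}=\mathcal{S}'$ immediately.
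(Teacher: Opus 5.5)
Your Steps 1 and 2 are correct, and so is the reduction at the start of Step 3: once $\mathcal{S}$ and $\mathcal{S}'$ are known to be silting, maximality (or injectivity of $\phi$, Proposition~\ref{monofs}) gives $\mathcal{S}=\mathcal{S}'$. However, the claim that an AIR tilting presentation is silting is the substantive part of the lemma. The paper gives no proof and imports the statement from \cite{WZ25}, and your proposal does not establish it.

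The first bullet of your route is only asserted: that $\mathcal{T}=\mathcal{S}^{\perp_{>0}}\cap\mathcal{H}_A^d$ is a functorially finite $s$-torsion class. It cannot come from ``presilting plus presentation'', because every presilting $\mathcal{S}\subseteq\mathrm{add}(A)^{[0,d]}$ is a presentation of $\mathrm{add}(\tau_{\geq-d+1}\mathcal{S})$. Moreover, your third bullet works verbatim for any such $\mathcal{S}$ whose $\mathcal{T}$ is a functorially finite $s$-torsion class. You would therefore have shown that every $(d+1)$-term presilting subcategory lies in a silting one. That is a Bongartz-type completion, which is not available in general for longer complexes. So the equality $\mathcal{S}^{\perp_{>0}}\cap\mathcal{H}_A^d=\mathrm{Fac}_d(\mathcal{M})$ has to be used in an essential way to build the torsion triangles and approximations, and you give no argument for this.

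Even granting the first bullet, the last bullet has two further holes.

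\emph{Summands outside $\mathrm{add}(A)[d]$.} For indecomposable $X\in\mathcal{R}$ not in $\mathrm{add}(A)[d]$, appealing to Step 1 for $\mathrm{add}(\tau_{\geq-d+1}\mathcal{R})$ is circular. Step 1 concerns presentations of $\mathcal{M}$, and you only know $\mathcal{M}\subseteq\mathrm{add}(\tau_{\geq-d+1}\mathcal{R})$. What is needed is an Auslander--Smal\o{} splitting argument:
\begin{enumerate}
\item $\tau_{\geq-d+1}X$ is Ext-projective in $\mathcal{T}$.
\item A right $\mathcal{M}$-approximation $M_0\to\tau_{\geq-d+1}X$ has cocone $K\in\mathcal{H}_A^d$, because $\tau_{\geq-d+1}X\in\mathrm{Fac}_d(\mathcal{M})$.
\item Using $\mathrm{Hom}(S,N)\cong\mathrm{Hom}(\tau_{\geq-d+1}S,N)$ for $N\in\mathcal{H}_A^d$, one gets $K\in\mathcal{S}^{\perp_{>0}}\cap\mathcal{H}_A^d=\mathcal{T}$. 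This is exactly where equality, rather than inclusion, enters.
\item Hence the triangle splits and $\tau_{\geq-d+1}X\in\mathcal{M}$.
\end{enumerate}

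\emph{Summands $P[d]$.} Here you only state an expectation. Your fallback characterization is circular, since its nontrivial inclusion is precisely the maximality of $\mathcal{S}$, i.e.\ that $\mathcal{S}$ is silting. One way to close it: suppose $P[d]\in\mathcal{R}$ but $P[d]\notin\mathcal{S}$. The right mutation $\mu^+_{P[d]}(\mathcal{R})$ in the sense of \cite{AI12} is again $(d+1)$-term, strictly bigger than $\mathcal{R}$, and still contains $\mathcal{S}$. Hence $\phi(\mathcal{R})\subseteq\phi(\mu^+_{P[d]}(\mathcal{R}))\subseteq\mathcal{S}^{\perp_{>0}}\cap\mathcal{H}_A^d=\phi(\mathcal{R})$. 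Injectivity of $\phi$ (Theorem~\ref{sfbij}) then forces $\mu^+_{P[d]}(\mathcal{R})=\mathcal{R}$, a contradiction.
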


\begin{proposition}\label{tausiltbij}
There exist isomorphisms of posets such that the following diagram commutes
\begin{equation}\label{diyihang}
\xymatrix{
\mathrm{AIR}\text{-}\mathrm{tilt}\,\mathcal{H}_A^d \ar[rr]^{\cong} && (d+1)\text{-}\mathrm{silt}\,A \\
&\tau\text{-}\mathrm{tiltp}\,\mathcal{H}_A^d \ar[ru]_{\cong} \ar[lu]^{\cong} &
}
\end{equation}
\end{proposition}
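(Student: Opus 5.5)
The plan is to build the two maps out of $\tau\text{-}\mathrm{tiltp}\,\mathcal{H}_A^d$ explicitly. The top arrow is then the composite, and commutativity comes for free. The top arrow itself should be the bijection of \cite{WZ25} between AIR tilting subcategories and $(d+1)$-term silting subcategories. It sends $\mathcal{M}$ to its unique presentation $\mathcal{S}$, which is well defined by \cite[Corollary 2.16]{WZ25}. Its inverse sends $\mathcal{S}$ to $\mathrm{add}(\tau_{\geq -d+1}\mathcal{S})$; in this direction one needs $\mathcal{S}^{\perp_{>0}}\cap\mathcal{H}_A^d=\mathrm{Fac}_d(\tau_{\geq-d+1}\mathcal{S})$, which is where the hypotheses on $A$ and Theorem \ref{sfbij} enter. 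This map is order-preserving in both directions, because $\phi$ of Theorem \ref{sfbij} is a poset isomorphism and the order on AIR tilting subcategories is inclusion of $\mathrm{Fac}_d$.

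For the right arrow I would use
\[
(M,P)\longmapsto \mathcal{S}_{(M,P)}:=\mathrm{add}(p(M)\oplus P[d]).
\]
\textbf{Presilting.} Lemma \ref{moriso} gives $\mathrm{Hom}(p(M),p(M)[i])\cong\mathbb{D}\mathrm{Hom}(M,\tau(M)[1-i])$ for $i\geq1$, and this vanishes by positive $\tau$-rigidity. Next, $\mathrm{Hom}(P[d],\mathcal{P}^{[0,d]}[>0])=0$. Finally, $\mathrm{Hom}(p(M),P[d+i])=0$ for $i\geq1$ by degree reasons.
\textbf{Mixed term.} It remains to control $\mathrm{Hom}(P[d],p(M)[i])=\mathrm{Hom}(P,p(M)[i-d])$. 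Applying $\mathrm{Hom}(P,-)$ to the triangle $\Omega^{d+1}(M)[d]\to p(M)\to M\to$ from \eqref{ppst}, this reduces to $\mathrm{Hom}(P,M[\leq 0])=0$, since $\Omega^{d+1}(M)\in D^{\leq 0}$ kills the other term.
\textbf{Silting.} I would show that $\mathcal{S}_{(M,P)}^{\perp_{>0}}\cap\mathcal{H}_A^d$ equals ${}^{\perp_{\leq0}}\tau(M)\cap P^{\perp_{\leq0}}\cap\mathcal{H}_A^d$, again via Lemma \ref{moriso} and the presentation triangle. The $\tau$-tilting condition then says this equals $\mathrm{Fac}_d(M)$. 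Hence $\mathcal{S}$ is a presilting subcategory in $\mathcal{P}^{[0,d]}$ whose associated class is the $s$-torsion class $\mathrm{Fac}_d(M)$, and I need to conclude that $\mathcal{S}$ is actually silting.

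I expect that conclusion to be the main obstacle. My first attempt would be a Bongartz-type completion: complete $\mathcal{S}$ to a silting $\mathcal{S}'\supseteq\mathcal{S}$ in $\mathcal{P}^{[0,d]}$, using \cite{WZ25} or \cite[Proposition 2.16]{AI12}. Since $\mathcal{S}'^{\perp_{>0}}\subseteq\mathcal{S}^{\perp_{>0}}$, $\phi$ is injective, and $\mathrm{Fac}_d(M)\subseteq\phi(\mathcal{S}')$ because $\mathrm{Fac}_d$ of $\tau_{\geq-d+1}\mathcal{S}'$ lies in the torsion class, one gets $\phi(\mathcal{S}')=\mathrm{Fac}_d(M)$. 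Alternatively I would count summands: $|p(M)|+|P|=|A|$ via $|p(M)|=|M|$ from the Remark, combined with \cite{Z24}'s result that basic $\tau$-tilting pairs have $|M|+|P|=|A|$. Then $\mathcal{S}'=\mathcal{S}$, because silting objects in $\mathrm{per}(A)$ have exactly $|A|$ summands.

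For the inverse direction, given $\mathcal{S}=\mathrm{add}(S)$ silting, I would write its basic additive generator as $p(M)\oplus P[d]$, splitting off all summands in $\mathcal{P}[d]$; the remaining part is a presentation $p(M)$ with $M=\tau_{\geq-d+1}S$, using the uniqueness from Proposition \ref{PequH} and the Remark. Reversing the Hom computations above shows that $(M,P)$ is a positive $\tau$-rigid pair. Then Remark \ref{rigidtotilt}, together with $\phi(\mathcal{S})=\mathrm{Fac}_d(M)$, gives the $\tau$-tilting condition. The two constructions are mutually inverse because $p(M)$ is unique.

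Order preservation follows from $\phi$ being a poset isomorphism and the order on pairs being inclusion of $\mathrm{Fac}_d(M)=\phi(\mathcal{S}_{(M,P)})$. The left arrow is $(M,P)\mapsto\mathrm{add}(M)$, which is AIR tilting with respect to $\mathcal{S}_{(M,P)}$. The triangle \eqref{diyihang} commutes by construction, since $\mathrm{add}(\tau_{\geq-d+1}\mathcal{S}_{(M,P)})=\mathrm{add}(M)$.
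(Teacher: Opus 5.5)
There is a genuine gap at the step you flagged yourself: proving that $\mathcal{S}_{(M,P)}=\mathrm{add}(p(M)\oplus P[d])$ is silting. Both of your routes need a silting $\mathcal{S}'\supseteq\mathcal{S}_{(M,P)}$ inside $\mathcal{P}^{[0,d]}$ (with $\mathcal{P}=\mathrm{add}(A)$), and nothing you cite provides one. \cite[Proposition 2.16]{AI12} is a Bongartz-type completion for presilting objects that are $2$-term relative to a silting object. For $d\geq 2$ there is no such general completion: the complement question for presilting complexes has a negative answer, so a $(d+1)$-term presilting subcategory need not lie in any silting subcategory.

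Even granting $\mathcal{S}'$, your first argument only yields $\phi(\mathcal{S}')=\mathrm{Fac}_d(M)=\mathcal{S}_{(M,P)}^{\perp_{>0}}\cap\mathcal{H}_A^d$. That does not by itself give $\mathcal{S}'=\mathcal{S}_{(M,P)}$, since injectivity of $\phi$ only compares silting subcategories. The counting fallback also needs $|M|+|P|=|A|$ for basic $\tau$-tilting pairs in $\mathcal{H}_A^d$. \cite{Z24} works with extended module categories of finite-dimensional algebras, and in any case such a count is normally a consequence of the bijection with silting objects, not an input to it.

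The missing observation is that silting comes for free from the top arrow. The definition of an AIR tilting subcategory asks only for an $\mathrm{add}(A)$-presentation $\mathcal{S}$ with $\mathcal{S}^{\perp_{>0}}\cap\mathcal{H}_A^d=\mathrm{Fac}_d(\mathcal{M})$; it has no presilting hypothesis. \cite[Theorem 2.8]{WZ25} then sends an AIR tilting subcategory to that presentation as a $(d+1)$-term silting subcategory. You already proved
$$\mathcal{S}_{(M,P)}^{\perp_{>0}}\cap\mathcal{H}_A^d={}^{\perp_{\leq 0}}\tau(M)\cap P^{\perp_{\leq 0}}\cap\mathcal{H}_A^d=\mathrm{Fac}_d(M).$$
So $\mathrm{add}(M)$ is AIR tilting with respect to $\mathcal{S}_{(M,P)}$, hence $\mathcal{S}_{(M,P)}$ is silting, and your hand check of the presilting property becomes unnecessary. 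This is the paper's route: the right arrow is the top arrow composed with $(M,P)\mapsto\mathrm{add}(M)$.

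With this repair the rest of your plan works. Your injectivity argument recovers $(M,P)$ from the basic generator $p(M)\oplus P[d]$, using that $p(M)$ has no summand in $\mathcal{P}[d]$. That is a tidy alternative to the paper's proof, via \cite[Corollary 2.28]{AI12}, that $P$ is determined by $M$. One small slip: $\mathrm{Hom}(P[d],\mathcal{P}^{[0,d]}[>0])=0$ is false as written, since $\mathcal{P}[d]\subseteq\mathcal{P}^{[0,d]}[1]$. What you need there is only $\mathrm{Hom}(P[d],P[d][>0])=0$, with the mixed term handled separately, as you do.
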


\begin{proof}
The isomorphism in the first row of \eqref{diyihang} follows from \cite[Theorem 2.8]{WZ25},
which maps the AIR-tilting subcategories of $\mathcal{H}_A^d$ to their $\mathrm{add}(A)$-presentations.

For any pair $(M,P)$ with $M \in \mathcal{H}_A^d$ and $P \in \mathrm{add}(A)$, set $\mathbb{P}:= p(M) \oplus P[d]$.
By Proposition \ref{PequH}, we get $\mathrm{add}(\mathbb{P})$ is an $\mathrm{add}(A)$-presentation of $\mathrm{add}(M)$.

Note that for any $N \in \mathcal{H}_A^d= A^{\perp_{\leq -d}}\cap A^{\perp_{>0}}=A[d]^{\perp_{\leq 0}}\cap A[d]^{\perp_{>d}}$,
\[\mathrm{Hom}(P[d], N[i]) = 0 \text{ for any } i \geq 1\]
if and only if
\[\mathrm{Hom}(P[d], N[i]) = 0 \text{ for any } i \leq d\]
if and only if
\[\mathrm{Hom}(P, N[i]) = 0 \text{ for any } i \leq 0.\]
Thus, we have $P^{\perp_{\leq 0}}\cap \mathcal{H}_A^d = P[d]^{\perp_{>0}}\cap \mathcal{H}_A^d$.
By Lemma \ref{moriso}, we have ${}^{\perp_{\leq 0}}(\tau(M))\cap \mathcal{H}_A^d = p(M)^{\perp_{>0}}\cap \mathcal{H}_A^d$.
Hence, we get ${}^{\perp_{\leq 0}}(\tau M) \cap P^{\perp_{\leq 0}}{\cap \mathcal{H}_A^d} = \mathbb{P}^{\perp_{>0}}\cap \mathcal{H}_A^d$.
Moreover, if $(M,P)$ is a basic $\tau$-tilting pair,
then
\[\mathrm{Fac}_d(M) = {}^{\perp_{\leq 0}}(\tau M) \cap P^{\perp_{\leq 0}} \cap \mathcal{H}_A^d = \mathbb{P}^{\perp_{>0}} \cap \mathcal{H}_A^d.\]
It follows that the subcategory $\mathrm{add}(M)$ of $\mathcal{H}_A^d$ is AIR tilting with respect to its $\mathrm{add}(A)$-presentation $\mathrm{add}(\mathbb{P})$.

If $(M,Q)$ is also a basic $\tau$-tilting pair, then $(M,P\oplus Q)$ is a positive $\tau$-rigid pair.
Since $(M,Q)$ is a $\tau$-tilting pair, we obtain
\[{}^{\perp_{\leq 0}}(\tau(M)) \cap (P\oplus Q)^{\perp_{\leq 0}} \cap \mathcal{H}_A^d\subseteq{}^{\perp_{\leq 0}}(\tau(M)) \cap Q^{\perp_{\leq 0}} \cap \mathcal{H}_A^d\subseteq\mathrm{Fac}_d(M).\]
By Remark \ref{rigidtotilt}, $(M,P\oplus Q)$ is a $\tau$-tilting pair.
According to the above discussion, we obtain $p(M)\oplus P[d]\oplus Q[d]$ is a silting object of $\mathrm{per}(A)$.
By \cite[Corollary 2.28]{AI12}, we get
\[|p(M)\oplus (P\oplus Q)[d]| = |\mathbb{P}|=|p(M)\oplus P[d]|.\]
Since $p(M)$ does not have $P'[d]$ as direct summands for any nonzero $P' \in \mathrm{add}(A)$, we get $|P\oplus Q|=|P|$.
Hence, $Q$ is a direct summand of $P$. Similarly, $P$ is also a direct summand of $Q$.
Since $P$ and $Q$ are both basic, we conclude $P\cong Q$.
Moreover, since $M$ is basic, we get that the correspondence $(M,P)\mapsto \mathrm{add}(M)$ gives an injective map from $\tau\text{-}\mathrm{tiltp}\,\mathcal{H}_A^d$ to $\mathrm{AIR}\text{-}\mathrm{tilt}\,\mathcal{H}_A^d$, and thus an injective map from $\tau\text{-}\mathrm{tiltp}\,\mathcal{H}_A^d$ to $(d+1)\text{-}\mathrm{silt}\,A$ by combining with the isomorphism in the first row of \eqref{diyihang}.

Conversely, let $\mathcal{S}$ be a silting subcategory of $\mathrm{per}(A)$.
By \cite[Proposition 2.20]{AI12} and the Krull-Schmidt property of $\operatorname{per}(A)$,
we can take its unique basic additive generator $\mathbb{S} = S \oplus P[d]$, where $P \in \mathrm{add}(A)$ and $S$ does not have $P'[m]$ as direct summands for any nonzero $P' \in \mathrm{add}(A)$, then $S = p(\tau_{\geq -d+1}\mathbb{S})$.
Moreover,
noting that $\mathcal{S}$ is a $(d+1)$-term silting subcategory of $\mathrm{per}(A)$, by \cite[Proposition 2.12]{WZ25}, we get
\[
\tau_{\geq-d+1}\mathbb{S}\in\mathrm{Fac}_d(\tau_{\geq-d+1}\mathbb{S}) = \mathbb{S}^{\perp_{>0}}\cap \mathcal{H}_A^d = {}^{\perp_{\leq 0}}(\tau (\tau_{\geq -d+1}\mathbb{S})) \cap P^{\perp_{ \leq 0}} \cap \mathcal{H}_A^d.
\]
Hence, $(\tau_{\geq-d+1}\mathbb{S}, P)$ is a basic $\tau$-tilting pair, and then
$\mathrm{add}(\tau_{\geq-d+1}\mathbb{S})$ is the AIR tilting subcategory with respect to its $\mathrm{add}(A)$-presentation $\mathrm{add}(\mathbb{S})=\mathcal{S}$. Therefore, we complete the proof.
\end{proof}

\section*{Acknowledgments}
This work is partially supported by the National Natural Science Foundation of China (No. 12271257) and the Natural Science Foundation of Jiangsu Province of China (No. BK20240137).

\end{document}